\documentclass[a4paper]{article}
\usepackage{amsfonts}
\usepackage{amssymb}
\usepackage{amsmath}
\usepackage{amsthm}
\usepackage{url}
\usepackage[utf8]{inputenc}
\usepackage[nottoc]{tocbibind}
\usepackage[symbol]{footmisc}
\usepackage{xcolor}
\usepackage{mathrsfs}

\usepackage{dsfont}
\usepackage{hyperref}
\hypersetup{
	colorlinks=true,
	linkcolor=blue,
	filecolor=magenta,      
	urlcolor=cyan,
}
\usepackage{cleveref}
\usepackage{enumitem}
\usepackage{mathtools}

\usepackage{a4wide}

\newtheorem{lemma}{Lemma}[section]
\newtheorem{theorem}[lemma]{Theorem}
\newtheorem*{theorem*}{Theorem}
\newtheorem{definition}[lemma]{Definition}
\newtheorem{proposition}[lemma]{Proposition}

\newtheorem*{proposition*}{Proposition}

\newtheorem{corollary}[lemma]{Corollary}

\newtheorem{notation}[lemma]{Notation}

\newtheorem{result}{Theorem}

\crefalias{result}{theorem}

\newcommand{\ZZ}{\mathbb{Z}}

\newcommand{\NN}{\mathbb{N}}

\newcommand{\inv}{^{-1}}

\newcommand{\abs}[1]{\left|#1\right|}
\newcommand{\norm}[1]{\left|\left|#1\right|\right|}
\newcommand{\set}[1]{\left\{#1\right\}}

\newcommand{\mc}[1]{\mathcal{#1}}

\newcommand{\normal}{\vartriangleleft}

\newcommand{\cl}{\mathrm{Cl}}

\newcommand{\lelangle}{\left\langle}
\newcommand{\rerangle}{\right\rangle}

\newcommand{\hall}[1]{\mc{H}_{#1}}

\title{The spectrum of conjugator length functions}
\author{Lukas Vandeputte\thanks{The author  kindly acknowledges the support by the group of Science Engineering and Technology at KU Leuven Campus Kulak.}}
\begin{document}
	\maketitle
	\begin{abstract}
		A recent program tries to find which functions appear as conjugator length functions. In this note, we show that any (computable) increasing function larger than $n$ appears as $\cl_G$ for some finitely generated (recursively presented) group. On the other hand, we demonstrate that either $\cl_G$ must be constant or $\cl_G(n)\succ n$. Combining these, we obtain a complete description of which functions appear as conjugator length functions of finitely generated groups.
	\end{abstract}
	\section*{Introduction}
	Recently, a lot of effort has been put into classifying which functions appear as conjugator length functions. Given a group $G$ with finite generating set $S$, this function $\cl_{G,S}(n)$ is given by$$
	\max\{\min\{\norm{h}_S\mid h\in G, hg_1h\inv=g_2\}\mid g_1\sim g_2,\norm{g_1}_S+\norm{g_2}_S\leq n\}
	$$
	where $\norm{\_}_S$ denotes the word norm with respect to a generating set $S$. The dependence on $S$ can be suppressed by passing to asymptotics. 
	Most of the effort in classifying which functions appear as conjugator length functions has focused on the class of finitely presented groups. Here the effort has been spearheaded by Bridson and Riley. For a more exhaustive treatment, we refer the reader to \cite{Bridson2026conjugator}, but we give a short rundown. Bridson and Riley successively found that certain classes of nilpotent groups can realise all polynomial functions \cite{Bridson2026Length}, they found a class of groups of conjugator length $n^\alpha$ for $\alpha$ dense in $[2,+\infty)$\cite{Bridson2025snowflake}, found groups with exponential conjugator length and fast-growing conjugator length \cite{Bridson2025groups}.
	More recently, the sharpest result in this fashion was obtained by Gillis \& Wagner in \cite{Gillis2026conjugator}, where they showed that every at least quadratic function that can be obtained as a time function of a so-called $S$-machine can be realised. This set contains all Dehn functions of finitely presented groups, up to standard equivalence.
	A commonality between all these results is that they only obtain functions that are either bounded, linear, or at least quadratic, but they obtain no functions in between.
	
	In the world of finitely generated groups that are not necessarily finitely presented, things have been quieter.
	Of course, all functions mentioned above also remain in this setting. Further, a result from the author \cite{Vandeputte2025residual} shows that every function $\Phi$ satisfying either $\Phi(n)>n$ and $\Phi(3n)>3\Phi(n)$, or satisfying $\Phi(n)\geq n^{1.64+\log_3\log_3(n)}$ appears as the conjugator length function of some finitely generated group. Furthermore, if the function $\Phi$ is computable, then the corresponding group is recursively presented.
	
	In this paper, we strengthen this result further: in \cref{sec:spectrum} we obtain for every (computable) function $f$ which is at least linear, a finitely generated (recursively presented) group $G_f$ such that $\cl_{G, S}(n)\simeq f(n)$.
	In particular, this also answers the question whether $\cl$ can lie strictly between $n$ and $n^2$.
	
	This just leaves the gap between the bounded functions and the linear functions. We demonstrate in \cref{sec:gap} that no functions appear in this gap.
	
	This leads to the following result:	
	
	\begin{result}\label{prop:mainResult}
		Let $f:\NN\rightarrow\NN$ be a non-decreasing function. There exists a finitely generated group $G$ whose conjugator length function satisfies $$\cl_{G}(n)\simeq f(n)$$ if and only if either $f(n)\simeq 1$ or $f(n)\succ n$. Moreover, if $f$ is computable, then $G$ may be chosen to be recursively presented.
	\end{result}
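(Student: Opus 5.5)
The proof splits into the ``if'' direction (realisation) and the ``only if'' direction (the gap). For the ``if'' direction with $f\simeq 1$, any finite group (even the trivial group) works: every conjugator can be taken from a fixed finite set, so $\cl_G$ is bounded. Such a group is finitely presented, hence recursively presented. For $f\succ n$, I will invoke the construction of \cref{sec:spectrum}. There, for each non-decreasing $f$ with $f(n)\succeq n$, a finitely generated group $G_f$ is built with $\cl_{G_f}\simeq f$, and $G_f$ is recursively presented whenever $f$ is computable. Since $\simeq$ is insensitive to the choice of finite generating set, this settles the existence half, including the ``moreover'' clause.

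For the ``only if'' direction, suppose $\cl_G$ is unbounded; I must show $\cl_G(n)\succeq n$. The basic tool is a truncation lemma. Let $g_2=hg_1h\inv$, where $h=s_1\cdots s_L$ is a geodesic word of minimal length $L$ among all conjugators. Put $h_i=s_1\cdots s_i$ and $g^{(i)}=h_ig_1h_i\inv$. If some $k$ with $\norm{k}<i$ satisfied $kg_1k\inv=g^{(i)}$, then $(s_{i+1}\cdots s_L)k$ would conjugate $g_1$ to $g_2$ with length $<L$, a contradiction. Hence the minimal conjugator from $g_1$ to $g^{(i)}$ has length exactly $i$. Since $\norm{g^{(i)}}\le\norm{g_1}+2i$, this gives
\begin{equation*}
\cl_G(2\norm{g_1}+2i)\ \ge\ i\qquad\text{for all } 0\le i\le L .
\end{equation*}
The same argument run from the $g_2$ end gives the symmetric bound with $g_2$ and $L-i$.

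There are then two cases. First, suppose some fixed element $g_1$ admits conjugates whose minimal conjugators have unbounded length $L$. Then the displayed inequality holds for every $i$ with $\norm{g_1}$ fixed, so $\cl_G(n)\ge n/2-\norm{g_1}$, which is $\succeq n$. Second, suppose instead that for every $g$ there is a bound $B(g)$ on minimal conjugator lengths from $g$, yet $\cl_G$ is still unbounded. Then the lengths $\norm{g_1}$ of witnessing pairs must tend to infinity.

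This second case is the main obstacle: I must rule out $\cl_G$ growing sublinearly (say like $\log n$) because the elements themselves are long. My first attempt will be to shorten $g_1$ while keeping a long minimal conjugator. I would replace $g_1$ by a cyclic conjugate, or by an element of minimal length in its conjugacy class, peeling generators off the ends of $g_1$. Each such step changes the relevant conjugator by one letter at a time, so minimality should be controlled by the same truncation argument as above. The aim is to show that if conjugators of length $L$ are needed at size $n$, then one can produce a pair of size $O(L)$ still needing conjugators of length $\Omega(L)$. Combining that with the truncation inequality would give linearly many scales $n\approx L$ with $\cl_G(n)\succeq n$, and monotonicity of $\cl_G$ would then fill in the intermediate values of $n$. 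If this peeling fails in general, I would instead argue by contradiction with a sequence of pairs whose ratio $L/n\to0$, and extract a single element falling into the first case.
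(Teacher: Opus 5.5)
Your ``if'' direction and your Case 1 are sound and agree with the paper. Case 1 is exactly the case where $g_1$ has an infinite conjugacy class. Your truncation gives the same bound $\cl_G(2\norm{g_1}+2i)\geq i$ as \Cref{prop:infiniteConjugacyClass}, which instead uses that the finite sets $\{hgh\inv : \norm{h}\leq n\}$ strictly increase with $n$.

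Two small fixes are needed there. First, with $g_2=hg_1h\inv$ and $h=s_1\cdots s_L$, you must truncate with the \emph{suffixes} $s_{L-i+1}\cdots s_L$ (the letters that act first); the shortcut is then $(s_1\cdots s_{L-i})k$. With prefixes as written, $(s_{i+1}\cdots s_L)k$ sends $g_1$ to its conjugate by a cyclic permutation of $h$, not to $g_2$. Second, the construction of \Cref{sec:spectrum} needs $f$ strictly increasing with $f(n)>n$, which you get from \Cref{prop:goodRepresentative}.

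The genuine gap is Case 2. The peeling heuristic gives no control. A step $g_1\mapsto sg_1s\inv$ can lower the minimal conjugator length by one while lowering $\norm{g_1}$ by at most two. So shrinking a pair of size $n$ to size $O(L)$, in exactly the regime $L\ll n$ you want to exclude, may consume the entire conjugator length, and it does nothing about $\norm{g_2}$. Your fallback, extracting an element of Case 1, is under the Case 2 hypothesis just a restatement of what must be proved.

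The missing idea is that Case 2 is empty, for an elementary reason:
\begin{itemize}
\item Your hypothesis says every conjugacy class is finite, since all conjugates of $g$ lie in the finite set $\{hgh\inv : \norm{h}\leq B(g)\}$.
\item For a finite generating set $S$, each centraliser $C_G(s)$ with $s\in S$ then has finite index by orbit--stabiliser. Hence $Z(G)=\bigcap_{s\in S}C_G(s)$ has finite index.
\item Choose a finite transversal $R$ of $Z(G)$. Every conjugate $hgh\inv$ equals $rgr\inv$, where $r\in R$ and $h\in rZ(G)$.
\item So $\cl_G$ is bounded by $\max_{r\in R}\norm{r}$, contradicting unboundedness.
\end{itemize}
This is the paper's route (\Cref{prop:alternative} and \Cref{prop:fiCenter}). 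The dichotomy is ``finite-index centre, hence bounded'' versus ``some infinite conjugacy class, hence $\cl_G\succ n$'', and no length-reduction argument is needed.
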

	We end by giving an overview of the proof. 
	Both directions of the proof are treated independently and can be treated as stand-alone.
	The bulk of the paper is spend on the ``$\Leftarrow$'' direction:
	First, for the super-linear functions in \cref{sec:spectrum}, we take two copies of the same group $\mc H_p/K_0$. Conjugacy in these groups is really easily understood. We take a direct product of these groups, where we amalgamate the centre according to the desired function. The conjugators of this new groups still behave like the conjugators in both components, but the geometry is distorted, leading to the ``$\Leftarrow$'' direction of the proof. For the ``$\Rightarrow$'' direction, in \cref{sec:gap} a result from the theory of $FC$ groups allows us to split in the case where eater the centre is finite index, leading to a bounded conjugator length, or where there is an infinite conjugacy class, which we use to demonstrate a linear lower bound on the conjugator length.
	\section{Preliminaries}
	\subsection{Notation and basic definitions}
	We use the convention that $0\in \NN$.
	Let $g_1,g_2\in G$, we denote $[g_1,g_2]=g_1g_2g_1\inv g_2\inv$. We denote $g_1\sim g_2$ if $g_1$ and $g_2$ are conjugate.
	For non-decreasing functions $f,g:\NN\rightarrow\NN$, denote $f(n)\succ g(n)$ if there exists some $C\in \NN$ such that $Cf(Cn+C)+C\geq g(n)$ for all $n\in\NN$. We say $f(n)\simeq g(n)$ if $f\prec g$ and $g\prec f$.\footnote{Note that this equivalence relation is different from the one used for Dehn functions, where one would add a $Cn$ term.}
	
	Let $\norm{\_}_S$ denote the word norm on $G$ where $S$ is some finite generating set.
	It is well known that word norms corresponding to different generating sets are equivalent. That is 
	for any pair $S,S'$, there exists some constant $C$ such that for any $g\in G$,$$
	\frac{1}{C}\norm{g}_S\leq \norm{g}_{S'}\leq C\norm{g}_S.
	$$
	
	Let $g_1,g_2\in G$ be two elements such that $g_1\sim g_2$, denote then$$
	\cl_{G,S}(g_1,g_2)=\min\{\norm{h}_S\mid h\in G, hg_1h\inv=g_2\}
	$$
	and denote 
	$$
	\cl_{G,S}(n)=\max\{\cl_{G,S}(g_1,g_2)\mid g_1,g_2\in G, g_1\sim g_2,\norm{g_1}_S+\norm{g_2}_S\leq n\}.
	$$
	This definition is equivalent to the definition from the introduction by unwinding definitions.
	Let $S_1,S_2$ be two generating sets of the same group $G$, then 
	$\cl_{G,S_1}(n)\simeq \cl_{G,S_2}(n)$. This follows immediately from the equivalence of the norms $\norm{\_}_S$ and $\norm{\_}_{S'}$
	This allows us to suppress the generating set when we are only considering asymptotics.
	\subsection{Recap on $\mc H_p$}
	As a base building block for our groups, we use Hall's groups. These groups were introduced by Hall in \cite{hall1954finiteness}.
	
	We define these groups $\hall p$ through a presentation as$$
	\hall p=\lelangle \{t,a_i,c_j\mid i\in\ZZ,j\in\NN\}\Big\vert \begin{matrix}
		ta_it\inv=a_{i+1}&i\in\ZZ\\
		[t,c_j]=[a_i,c_j]=1&j\in\NN,i\in\ZZ\\
		a_i^p=c_j^p=1&i\in\ZZ,j\in\NN\\
		[a_i,a_{i+j}]=c_j&i\in\ZZ,j\in \NN
	\end{matrix}\rerangle.
	$$
	The following facts are immediate from the presentation, or are alternatively mentioned in \cite[sec. 3.2.]{hall1954finiteness}:
	The group $\mc H_p$ is finitely generated by $S=\{t,a_0
	\}$.
	The elements $\{a_i\mid i\in\ZZ\}$ generate a $2$-step nilpotent group which we call $N_p$. The derived subgroup of $N_p$, which we call $C_p$, is a $\ZZ_p$-module, freely generated by the elements $\{c_j\mid j\in\NN_{>0}\}$. The quotient $N_p/C_p$ is once again a $\ZZ_p$-module, freely generated by the elements $\{a_iC_p\mid i\in\ZZ\}$.
	The quotient $\hall p/N_p$ is an infinite cyclic group, generated by $tN_p$. The group $C_p$ is central in $\hall p$.
	Conjugation by $t$ acts on generators $a_i$ by index shift, and leaves $c_i$ generators invariant.
	It is clear from the presentation that the quotient $\mc H_p/C_p$ is isomorphic to the lamplighter group $\ZZ_p\wr\ZZ$ where $a_0$ generates the lamp group and $t$ generates the street group.

	We have quite a lot of control over the geometry of these groups.
	
	The following statements appeared in \cite{Vandeputte2025residual} for the $p=0$ case. The proofs, however, are identical.

	\begin{definition}
		Let $S_n\subset N_p$ be defined as$$
		S_{n}=\set{a_i\mid i\in [-n,n]}.
		$$
	\end{definition}
	None of the sets $S_n$ generate $N_p$. Nevertheless, we have the following.
	\begin{lemma}{\cite[Lemma 3.5.]{Vandeputte2025residual}}
		\label{prop:support}
		Let $g\in N_p$ and $n\in\NN$. Suppose that $\norm{g}_S\leq n$, then $g\in \langle S_{n}\rangle$.
	\end{lemma}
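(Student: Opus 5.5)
Let $w$ be a word of length at most $n$ in $S^{\pm1}=\{t^{\pm1},a_0^{\pm1}\}$ representing $g\in N_p$. The plan is to rewrite $w$ as a product of conjugates of $a_0^{\pm1}$ by powers of $t$, and to check that every power of $t$ that occurs lies in $[-n,n]$.

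First, write $w=t^{e_1}a_0^{\epsilon_1}t^{e_2}a_0^{\epsilon_2}\cdots t^{e_k}a_0^{\epsilon_k}t^{e_{k+1}}$, where $e_i\in\ZZ$, $\epsilon_i=\pm1$, and $\sum_i\abs{e_i}+k\leq n$. Set the partial sums $s_j=e_1+\dots+e_j$. Inserting $t^{-s_j}t^{s_j}$ between consecutive letters gives
$$
g=\left(t^{s_1}a_0^{\epsilon_1}t^{-s_1}\right)\left(t^{s_2}a_0^{\epsilon_2}t^{-s_2}\right)\cdots\left(t^{s_k}a_0^{\epsilon_k}t^{-s_k}\right)t^{s_{k+1}}.
$$
By the relation $ta_it\inv=a_{i+1}$ we have $t^{s}a_0t^{-s}=a_s$. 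So $g=a_{s_1}^{\epsilon_1}\cdots a_{s_k}^{\epsilon_k}\,t^{s_{k+1}}$.

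Second, we need $s_{k+1}=0$. The product $a_{s_1}^{\epsilon_1}\cdots a_{s_k}^{\epsilon_k}$ lies in $N_p$, and so does $g$. Hence $t^{s_{k+1}}\in N_p$. Since $\hall p/N_p$ is infinite cyclic generated by $tN_p$, this forces $s_{k+1}=0$. Therefore $g=a_{s_1}^{\epsilon_1}\cdots a_{s_k}^{\epsilon_k}$.

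Third, we bound the indices: $\abs{s_j}\leq\sum_{i\leq j}\abs{e_i}\leq n$ for each $j\leq k$. Hence every $a_{s_j}$ lies in $S_n$, and $g\in\langle S_n\rangle$.

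The one step that is more than bookkeeping is $s_{k+1}=0$, and it rests on the stated fact that $t$ has infinite order modulo $N_p$. The bound on the indices actually shows slightly more: one gets $\abs{s_j}\leq n-k$, which is a sharper range than $[-n,n]$.
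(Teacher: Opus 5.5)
Your proof is correct. The paper does not reprove this lemma but cites it from \cite{Vandeputte2025residual}. Your argument, which rewrites the word as $\prod_j t^{s_j}a_0^{\epsilon_j}t^{-s_j}=\prod_j a_{s_j}^{\epsilon_j}$ with $\abs{s_j}\le n$ and then uses that $tN_p$ has infinite order in $\hall p/N_p$ to force the trailing $t^{s_{k+1}}$ to vanish, is the standard route to this statement, and it also covers the degenerate case $k=0$.
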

	We also have a similar result for the derived subgroup:
	\begin{definition}
		Let $S_{c,n}\subset C_p$ be defined as$$
		S_{c,n}=\set{c_i\mid i\in [1,n]}.
		$$
	\end{definition}
	\begin{corollary}{\cite[Corollary 3.8.]{Vandeputte2025residual}}\label{prop:supportC}
		Let $g\in \langle \set{c_i\mid i\in \ZZ}\rangle$ and suppose that $\norm{g}\leq n$, then $g\in \lelangle S_{c,n}\rerangle$.
	\end{corollary}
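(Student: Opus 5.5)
The plan is to reprove the support statement of \cref{prop:support} in a slightly sharper, translated form, and then use centrality of $C_p$ together with the structure of $N_p/C_p$. The obvious shortcut fails: \cref{prop:support} alone only gives $g\in\langle S_n\rangle=\langle a_{-n},\dots,a_n\rangle$, and commutators $[a_i,a_j]$ of these generators produce $c_{|j-i|}$ with $|j-i|$ up to $2n$, which is too large. So I will use the freedom to conjugate by powers of $t$, which fixes $g$ because $C_p$ is central in $\hall p$.

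\textbf{Step 1 (the word lives on a window of width at most $n$).} Write $g$ as a word $w$ in $t^{\pm1},a_0^{\pm1}$ of length at most $n$. Since $g\in N_p$ and $\hall p/N_p\cong\ZZ$ is generated by $tN_p$, the total $t$-exponent of $w$ is zero. Record the running $t$-height before each occurrence of $a_0^{\pm1}$. Rewriting $w$ by inserting $t^{h}t^{-h}$ expresses $g$ as a product of letters $a_{h_k}^{\pm1}$, where the $h_k$ are these heights. The heights are visited by a walk that uses at most $n$ letters, and at least one of those letters is an $a_0$-letter (unless $g=1$, which is trivial). Hence all $h_k$ lie in an interval $[m,m+n-1]$.

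\textbf{Step 2 (translate).} Because $g\in C_p$ is central, $g=t^{-m}gt^{m}$, and this conjugation shifts every index down by $m$. Therefore $g\in\langle a_0,\dots,a_{n-1}\rangle\subseteq\langle a_0,\dots,a_n\rangle$.

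\textbf{Step 3 (use the nilpotent normal form).} The subgroup $\langle a_0,\dots,a_n\rangle$ is $2$-step nilpotent, with commutators $[a_i,a_j]=c_{j-i}$ (or their inverses) central. Hence every element of it can be written as $\prod_{i=0}^{n}a_i^{e_i}\cdot c$ with $c\in\langle c_1,\dots,c_n\rangle=\langle S_{c,n}\rangle$. Projecting $g$ to $N_p/C_p$, which is freely generated as a $\ZZ_p$-module by the $a_iC_p$, gives $\sum e_i\,a_iC_p=0$ since $g\in C_p$. Thus all $e_i\equiv0 \pmod p$, and because $a_i^p=1$ the product $\prod_i a_i^{e_i}$ is trivial. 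We conclude $g=c\in\langle S_{c,n}\rangle$.

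The main obstacle is Step 1: the careful bookkeeping that the heights span an interval of width at most $n$ rather than $2n$. This bookkeeping is what the central translation in Step 2 exploits; the rest is routine.
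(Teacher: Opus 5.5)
Your proof is correct. The paper gives no argument for this statement: it imports it as Corollary~3.8 of \cite{Vandeputte2025residual}, proved there for $p=0$, and only asserts that the proof carries over. So there is no in-text proof to compare with, and your write-up works as a self-contained proof.

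Your diagnosis of the naive route is right. \Cref{prop:support} as stated only puts $g$ in $\langle a_{-n},\dots,a_n\rangle$, whose derived subgroup is $\langle c_1,\dots,c_{2n}\rangle$. Some sharper control of the window is therefore needed, and your Step~1 supplies it. Your Step~3 also handles the one point where $p\neq 0$ differs from the cited setting: the exponents $e_i$ vanish only mod $p$, and it is $a_i^p=1$ that makes $\prod a_i^{e_i}$ trivial.

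Two small remarks.

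First, Step~2 is harmless but unnecessary. The derived subgroup of $\langle a_m,\dots,a_{m+n-1}\rangle$ is generated by the commutators $[a_i,a_j]=c_{j-i}$, which depend only on index differences. Step~3 therefore already works on the untranslated window. Centrality of $g$ is never actually used; what you use is $g\in N_p$ (zero total $t$-exponent) and $g\in C_p$ (to force $e_i\equiv 0 \pmod p$).

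Second, since the total $t$-exponent is zero, the $t$-walk of $w$ is closed, so every visited height satisfies $\abs{h}\le \lfloor n/2\rfloor$. Its range also has width at most $\lfloor n/2\rfloor$. This yields the stronger conclusion $g\in\langle c_i\mid 1\le i\le n/2\rangle$. It also shows that a version of \Cref{prop:support} with window $[-\lfloor n/2\rfloor,\lfloor n/2\rfloor]$, which the same walk argument gives, would yield the corollary directly via your Step~3.
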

	
	We also recall a result on conjugacy classes in $\mc H_p$:
	\begin{lemma}{\cite[Corollary 3.11.]{Vandeputte2025residual}}\label{prop:conjInSubgroup}
		Let $g_1,g_2\in \mc H_p$ such that $g_1C_p=g_2C_p$. If $g_1,g_2\notin N_p$, then $g_1\sim g_2$ if and only if $g_1=g_2$. On the other hand if $g_1,g_2\in N_p$, then $g_1\sim g_2$ if and only if $g_2g_1\inv\in [g_1,N_p]$.
	\end{lemma}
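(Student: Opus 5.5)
The plan is to work through the quotient map $\pi:\hall p\to\hall p/C_p\cong\ZZ_p\wr\ZZ$ and use that $C_p$ is central. Suppose $hg_1h\inv=g_2$ with $g_1C_p=g_2C_p$. Then $\pi(h)$ commutes with $\pi(g_1)$, and $g_2g_1\inv=[h,g_1]\in C_p$. So the whole question is which elements of $C_p$ arise as $[h,g_1]$ for $h$ in $\pi\inv(Z(\pi(g_1)))$, the preimage of the centraliser of $\pi(g_1)$.

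The key observation is that on this preimage the map $h\mapsto[h,g_1]$ is a homomorphism into $C_p$. Indeed, $[hh',g_1]=h[h',g_1]h\inv[h,g_1]=[h',g_1][h,g_1]$, because $[h',g_1]$ is central. So it suffices to describe the centraliser in the lamplighter group and evaluate the homomorphism on generators.

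\textbf{Case $g_1,g_2\notin N_p$.} Write $\pi(g_1)=t^kv$ with $k\neq0$ and $v$ a finitely supported lamp configuration.
\begin{itemize}
\item I will compute the centraliser of $t^kv$ in $\ZZ_p\wr\ZZ$. An element $t^mw$ commutes with it iff $(1-t^k)w=(1-t^m)v$ in the module $\ZZ_p[t^{\pm1}]$.
\item For $m=0$ this reads $(1-t^k)w=0$, which forces $w=0$ because $w$ has finite support and $k\neq0$. Hence the centraliser meets the lamp subgroup trivially, so it embeds into $\ZZ$ and is infinite cyclic, generated by some $\pi(y)$.
\item Then $\pi(g_1)=\pi(y)^r$ for some $r$, so $g_1=y^rc'$ with $c'\in C_p$ central.
\item The homomorphism on the generator is $[y,g_1]=[y,y^rc']=[y,y^r]=1$. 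It is therefore trivial on the whole preimage $\langle y\rangle C_p$.
\item Hence $g_2=g_1$. The converse is trivial.
\end{itemize}
The step I expect to need the most care is identifying the centraliser as cyclic; the finite-support argument above is what I would rely on.

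\textbf{Case $g_1,g_2\in N_p$.}
\begin{itemize}
\item If $g_1\in C_p$, then $g_1$ is central, so $g_2=g_1$ and $[g_1,N_p]=1$, and the claim holds.
\item Otherwise $\pi(g_1)$ is a nonzero finitely supported lamp configuration. Write $h=t^mx$ with $x\in N_p$. The condition $\pi(hg_1h\inv)=\pi(g_1)$ says that shifting by $t^m$ fixes a nonzero finitely supported vector, which forces $m=0$. Hence $h\in N_p$ and $g_2g_1\inv=[h,g_1]=[g_1,h]\inv\in[g_1,N_p]$.
\item Here $[g_1,N_p]$ is a subgroup, namely the image of the homomorphism $x\mapsto[g_1,x]$ on the $2$-step nilpotent group $N_p$, so it is closed under inverses.
\item Conversely, if $g_2g_1\inv=[g_1,x]$ with $x\in N_p$, then $[g_1,x]=[x\inv,g_1]$ because $N_p$ is $2$-step nilpotent. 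Taking $h=x\inv$ gives $hg_1h\inv=[h,g_1]g_1=g_2$.
\end{itemize}
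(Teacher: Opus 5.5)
Your proof is correct. There is no in-paper argument to compare it with: the paper does not prove this lemma, but imports it from \cite{Vandeputte2025residual}. There it is proved for $p=0$, and the paper only remarks that the proof is identical for prime $p$. Your argument therefore works as a self-contained verification.

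The structure is sound. Centrality of $C_p$ makes $h\mapsto[h,g_1]$ a homomorphism on the preimage of the centraliser of $\pi(g_1)$, which reduces everything to centralisers in $\ZZ_p\wr\ZZ$.

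\begin{itemize}
\item For $g_1\notin N_p$, the centraliser of $\pi(g_1)$ is infinite cyclic and contains $\pi(g_1)$. Writing $g_1=y^rc'$ with $y$ a lift of a generator, the homomorphism kills $y$ and $C_p$, hence all of $\langle y\rangle C_p$.
\item For $g_1\in N_p\setminus C_p$, the centraliser lies in the base group, which forces every conjugator into $N_p$. Bilinearity of commutators in the $2$-step nilpotent group $N_p$ then shows that $[g_1,N_p]$ is already a subgroup, so the set-versus-subgroup reading of that notation is harmless. The same bilinearity gives the converse via $h=x\inv$.
\end{itemize}

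The only fact you use about the lamp group is that base elements are finitely supported; you never need $\ZZ_p[t^{\pm1}]$ to be a domain. So the argument works verbatim for $p=0$ and for every prime $p$, which is exactly what the paper's ``the proofs are identical'' remark requires.

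One cosmetic point: the equation $(1-t^k)w=(1-t^m)v$ is right if elements are written $wt^m$. With your $t^mw$ ordering one gets $t\inv$ in place of $t$, which changes nothing.
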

	
	\begin{corollary}\label{prop:conjInSubgroupQuotient}
		Let $K\normal C_p$, Let $g_1,g_2\in \mc H_p$ such that $g_1C_p=g_2C_p$. If $g_1,g_2\notin N_p$, then $g_1K\sim g_2K$ if and only if $g_1K=g_2K$.
	\end{corollary}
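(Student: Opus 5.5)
The plan is to lift the conjugacy $g_1K\sim g_2K$ in $\mc H_p/K$ to a relation in $\mc H_p$ that differs from a genuine conjugacy only by an element of $K$. We then exploit that $K$ is central to show this discrepancy must vanish.

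The direction ``$\Leftarrow$'' is trivial, since equal elements are conjugate. For ``$\Rightarrow$'', suppose $hg_1h\inv K=g_2K$ for some $h\in\mc H_p$. Then there is $k\in K$ with $hg_1h\inv=g_2k$. Because $K\le C_p$ and $C_p$ is central in $\hall p$, the element $g_2k$ lies in the same $C_p$-coset as $g_2$, hence as $g_1$. It also lies outside $N_p$: its image in $\hall p/N_p$ equals that of $g_2$, because $C_p\le N_p$.

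Now apply \cref{prop:conjInSubgroup} to the pair $g_1$ and $g_2k$. They have the same image modulo $C_p$, neither lies in $N_p$, and they are conjugate in $\mc H_p$ via $h$. The lemma therefore gives $g_1=g_2k$, so $g_1K=g_2K$, which is the claim.

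There is no real obstacle. The only point needing care is checking the hypotheses of \cref{prop:conjInSubgroup} for the modified element $g_2k$ rather than for $g_2$ itself. This uses two facts: $K\subseteq C_p\subseteq N_p$, and $C_p$ is central, so multiplying by $k$ changes neither the $C_p$-coset nor membership in $N_p$. The normality of $K$ in $C_p$ is automatic here, because $C_p$ is central and hence $K$ is normal in $\hall p$; this makes the quotient $\hall p/K$ well defined.
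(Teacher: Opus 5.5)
Your proposal is correct and follows the same route as the paper: the paper also lifts the conjugacy to an element $g_3\in g_2K$ conjugate to $g_1$ in $\hall p$ and applies \cref{prop:conjInSubgroup} to get $g_1=g_3$. You spell out the hypothesis checks for $g_3=g_2k$ (same $C_p$-coset, not in $N_p$), which the paper leaves implicit. One small remark: centrality of $C_p$ is only needed for $K$ to be normal in $\hall p$; $g_2kC_p=g_2C_p$ holds simply because $k\in C_p$.
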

	\begin{proof}
		The ``$\Leftarrow$'' direction is trivial. Assume $g_1K\sim g_2K$.
		Let $g_3$ be such that $g_1\sim g_3$ and such that $g_2K=g_3K$. By \Cref{prop:conjInSubgroup}, we have $g_1=g_3$, from which the result follows.
	\end{proof}

	\section{Realising the functions}\label{sec:spectrum}
	Our groups will be obtained by taking the direct product of two Hall $p$-groups and amalgamating the two copies of $C_p$ in the right manner.

	Our construction works for strictly increasing functions $f$ such that $f(n)\geq n$. This condition is not a weakening of the conditions of \Cref{prop:mainResult} by the following lemma.
	\begin{lemma}\label{prop:goodRepresentative}
		Let $f:\NN\rightarrow\NN$ be a non-decreasing function such that $f(n)\succ n$, then there exists some strictly increasing function $g:\NN\rightarrow\NN$ such that $g(n)>n$ and such that $f\simeq g$.
		Furthermore, if $f$ is computable, then $g$ can be chosen to be computable.
	\end{lemma}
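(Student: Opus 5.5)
The idea is to take the explicit representative
$$
g(n)=f(n)+n+1 ,
$$
and check the required properties directly from the definition of $\succ$ in the preliminaries. Recall that $f\succ g$ means there is $C\in\NN$ with $Cf(Cn+C)+C\geq g(n)$ for all $n$.

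First, $g$ is strictly increasing and satisfies $g(n)>n$. Indeed, $f$ is non-decreasing and $n\mapsto n+1$ is strictly increasing, so their sum is strictly increasing. Also $g(n)\geq n+1>n$ because $f$ takes values in $\NN$.

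Next, I show $f\prec g$, i.e.\ that $g$ dominates $f$. Taking $C=1$ gives $1\cdot g(n+1)+1\geq g(n)\geq f(n)$.

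The substantive direction is $g\prec f$, i.e.\ that $f$ dominates $g$. This is the only place the hypothesis $f(n)\succ n$ is used. Fix $C\geq 1$ with $Cf(Cn+C)+C\geq n$ for all $n$; we may enlarge $C$ if needed, since the inequality persists. Set $D=2C$. Since $f$ is non-decreasing and $2Cn+2C\geq Cn+C\geq n$, we have $f(Dn+D)\geq f(Cn+C)$ and $f(Dn+D)\geq f(n)$. Splitting $Df(Dn+D)+D$ into two halves then gives
$$
Df(Dn+D)+D\geq \bigl(Cf(Cn+C)+C\bigr)+\bigl(Cf(n)+C\bigr)\geq n+f(n)+1=g(n),
$$
using $C\geq 1$ in the last step. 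Hence $f\simeq g$.

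Finally, if $f$ is computable, then $g$ is obtained from $f$ by adding the computable function $n\mapsto n+1$, so $g$ is computable. The only point requiring care is the domination step, which reduces to the halving trick above using monotonicity of $f$. I do not expect any real obstacle.
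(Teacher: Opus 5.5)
Your proof is correct and follows the paper's argument. You use the same representative $g(n)=f(n)+n+1$, the same monotonicity checks, and the same domination step; the only cosmetic difference is the constant, where you take $2C$ and the paper uses $C+1$ via $(C+1)f((C+1)n+C+1)+C+1\geq Cf(Cn+C)+C+f(n)$.
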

	\begin{proof}
		This works for $g(n)=f(n)+n+1$. If $f$ is computable, then so is $g$. As $f$ is non-decreasing, and $n$ is strictly increasing, it follows that $g$ is strictly increasing. We have $g(n)>n$ as $f(n)$ is non-negative. As $g$ is strictly larger than $f$, we have $g(n)\succ f(n)$p. Finally, as $f(n)\succ n$, there exists some constant $C\geq 1$ such that $Cf(Cn+C)+C>n$ for any choice of $n$. It follows that $(C+1)f((C+1)n+C+1)+C+1\geq Cf(Cn+C)+C+f(n)>n+f(n)$ and thus $f(n)\succ g(n)$.
	\end{proof}
	\newcommand{\rceilq}{\rceil_{q^\NN}}
	\begin{definition}
		Let $f:\NN\rightarrow\NN$ be a function such that $f(n)> n$. Let $q$ be an integer, denote then $f_q:\NN\rightarrow q^\NN$ which maps $k$ to $\lceil f(q^{k})\rceilq$ where $\lceil\_\rceilq$ denotes rounding up to the nearest power of $q$. For notational convenience we also define $f_q(-1)=-1$\footnote{This in particular guarantees that $f_q(0)\neq f_q(-1)$ later.}.
	\end{definition}
	\begin{lemma}\label{prop:qRounding}
		Let $f$ be non-decreasing, then $f(n)\simeq f_q(\lceil\log_q n\rceil)$. 
	\end{lemma}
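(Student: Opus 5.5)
Write $g(n)=f_q(\lceil\log_q n\rceil)$ for $n\geq 1$, with $g(0)=f_q(0)$ say; $q\geq 2$ is fixed. We prove the two inequalities $f\prec g$ and $g\prec f$ separately. Both follow from comparing $n$ with the power $q^{\lceil\log_q n\rceil}$ and $f(\,\cdot\,)$ with its rounding up to a power of $q$. Two elementary facts drive the argument. For $n\geq 1$, $k=\lceil\log_q n\rceil$ satisfies $q^{k-1}<n\leq q^k$, so $q^k<qn$. For any $m\geq 1$, $m\leq \lceil m\rceilq< qm$. The value $m=0$ is a harmless edge case, where rounding gives $1$, and the additive constant in the definition of $\succ$ absorbs it.

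For $f\prec g$, i.e.\ $f(n)\leq Cg(Cn+C)+C$: since $f$ is non-decreasing and $n\leq q^k$, we get $f(n)\leq f(q^k)\leq \lceil f(q^k)\rceilq=g(n)$ for $n\geq1$. The case $n=0$ is covered by the additive constant $C\geq f(0)$. So $C=\max(1,f(0))$ works.

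For $g\prec f$: for $n\geq 1$ we have $g(n)=\lceil f(q^k)\rceilq\leq q\,f(q^k)+1$, where the $+1$ covers $f(q^k)=0$. Monotonicity of $f$ and $q^k<qn$ give $f(q^k)\leq f(qn)$. Hence $g(n)\leq qf(qn)+1\leq Cf(Cn+C)+C$ with $C=q+g(0)$, which also handles $n=0$. Note that $g$ is non-decreasing, because $k\mapsto f_q(k)$ is non-decreasing (as $f$ is) and $n\mapsto\lceil\log_q n\rceil$ is non-decreasing, so $\simeq$ is being applied within the intended class.

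There is no real obstacle here. The only care needed is bookkeeping of the boundary cases ($n=0$, $f=0$ values, and the convention $f_q(-1)=-1$, which is never used for $n\geq1$) and making sure all constants fit into the single-$C$ form $Cf(Cn+C)+C$ of the paper's definition of $\succ$.
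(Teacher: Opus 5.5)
Your proof is correct and follows the same idea as the paper's one-line argument: both $n\mapsto q^{\lceil\log_q n\rceil}$ and $\lceil\cdot\rceil_{q^\NN}$ change their argument by at most a factor $q$, and monotonicity of $f$ does the rest. You simply make the constants and the boundary cases ($n=0$, $f(q^k)=0$) explicit, and your later remark that $g$ is non-decreasing is what justifies passing from $f(n)\leq g(n)$ to $f(n)\leq Cg(Cn+C)+C$.
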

	\begin{proof}
		The result is immediate, as $\lceil \_\rceilq$ increases $n$ by at most a factor $q$.
	\end{proof}
	Henceforth, fix a prime $p$ and fix $q$ an integer such that $q>2p+6$.
	\begin{definition}\label{def:theMainGroup}
		Let $f:\NN\rightarrow\NN$ be strictly increasing such that $f(n)> n$, denote $K_f\normal\hall p\times\hall p$ the central subgroup $$
		\lelangle
		\begin{matrix}
			(c_i,1)&\text{ $i$ is \emph{not} a power of $q$}\\
			(1,c_i)&\text{$i$ is \emph{not} of the form $f_q(k)$ for some $k\in\NN$}\\
			(c_{q^k}\inv,c_{f_q(k)})&\text{$k\in\NN$ and $f_q(k)\neq f_q(k-1)$}\\
			(c_{q^k},1)&f_q(k)=f_q(k-1)
		\end{matrix}
		\rerangle
		$$
		and define $G_f$ as $(\hall p\times\hall p)/K_f$.
	\end{definition}
	As $\mc H_p$ is finitely generated, so is $G_f$.
	If $f$ is computable, then $G_f$ is also recursively presented:	
	\begin{lemma}\label{prop:rPresented}
		Let $f$ be a computable function. Then $G_f$ is recursively presented.
	\end{lemma}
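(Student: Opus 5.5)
We write down an explicit presentation of $G_f$ with a recursively enumerable set of relators; in fact we check that the relator set is recursive, which is more than enough. The generators are $t,a_i,c_j$ for the first factor and $t',a'_i,c'_j$ for the second. We take the defining relators of two copies of $\hall p$, the relations making the two copies commute, and relators that kill the generators of $K_f$ listed in \Cref{def:theMainGroup}. By von Dyck's theorem this presents $(\hall p\times\hall p)/K_f=G_f$. The infinite-generator presentation can then be converted to one on the finite generating set $\{t,a_0,t',a'_0\}$: replace $a_i$ by $t^ia_0t^{-i}$ and $c_j$ by $[a_0,a_j]$, which is a computable substitution. Computable images of recursive sets are recursively enumerable, and by Craig's trick a recursively enumerable presentation is equivalent to a recursive one.

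It therefore suffices to show that each family of relators is recursively enumerable. For the relators of $\hall p\times\hall p$ this is immediate: the relations in the presentation of $\hall p$ are indexed by $i\in\ZZ$ and $j\in\NN$ through explicit formulas. The commuting relations $[x,y']$ are indexed by pairs of generators, so they are enumerable as well.

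The only point that uses computability of $f$ is deciding which generators of $K_f$ occur. First, $k\mapsto f_q(k)=\lceil f(q^k)\rceilq$ is computable: compute $f(q^k)$, then find the least power of $q$ that is at least $f(q^k)$ by iterated multiplication. Membership ``$i$ is a power of $q$'' is decidable. Membership ``$i=f_q(k)$ for some $k$'' is also decidable. Since $f$ is strictly increasing with $f(n)>n$, we have $f_q(k)\geq f(q^k)>q^k\geq k$, so only the finitely many $k<i$ need to be checked. Finally, the condition $f_q(k)\neq f_q(k-1)$, with the convention $f_q(-1)=-1$, is a comparison of two computed numbers. Hence the list of generators of $K_f$ is recursively enumerable, and in fact recursive.

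The main obstacle, though a mild one, is this bound on the search for $k$ in the second family. Without it the condition ``$i$ is not of the form $f_q(k)$'' would only be co-recursively enumerable. Even without the bound, one could observe that $(1,c_i)$ for $i=f_q(k)$ with $f_q(k)=f_q(k-1)$ is still killed, or rely on enumerability alone, but the inequality $f_q(k)>k$ settles the point directly.
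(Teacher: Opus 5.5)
Your proof is correct and follows the paper's route: $\hall p\times\hall p$ is recursively presented, and the listed generators of $K_f$ form a recursive set because $f_q$ is computable. The only variation is that you decide whether $i$ is a value of $f_q$ by a bounded search over $k<i$ (using $f_q(k)>k$), whereas the paper enumerates the non-decreasing, unbounded $f_q$ until it reaches or passes $i$; your closing aside that $(1,c_{f_q(k)})$ is killed when $f_q(k)=f_q(k-1)$ is false (it never lies in $K_f$, being identified with the nontrivial image of $(c_{q^{k_0}},1)$ for the minimal $k_0$ with $f_q(k_0)=f_q(k)$), but nothing in your argument depends on it.
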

	\begin{proof}
		It is clear that $\mc H_p\times \mc H_p$ is recursively presented. It thus suffices to show that $K_f$ is generated by a recursively enumerable set. The set $\{(c_i,1)\mid i\notin q^\NN\}$ is clearly recursive and as $f$ is computable, so is $f_q$. 
		This implies that the set $\{k\in\NN\mid f_q(k)\neq f_q(k-1)\}$ is computable.
		From this, it follows that both the set 
		$\{(c_{q^k},1)\mid f_q(k)=f_q(k-1)\}$ and the set $$\{(c_{q^k}\inv,c_{f_q(k)})\mid\text{$k\in\NN$ and $f_q(k)\neq f_q(k-1)$}\}$$ are computable.
		Furthermore, as $f_q$ is non-decreasing, we may enumerate all elements of the form $f_q(k)$ in order. 
		In particular, for any $i$ we may check if $i$ is of the form $f_q(k)$ by enumerating these elements in order until either $f_q(k)=i$, or until $f_q(k)>i$, in which case $i$ can not be of the form $f_q(k)$. With this it follows that also the set $\{(1,c_i)\mid \text{$i$ is \emph{not} of the form $f_q(k)$ for some $k\in\NN$}\}$ is recursive. As a finite union of recursive sets is recursive, it follows that $K_f$ is recursively-generated. The result now follows as the quotient of a recursively-presented group by a recursively-generated kernel is again recursively-presented.
	\end{proof}
	
	\begin{notation}
		To avoid notational clutter, we denote $t',a_i',c_i'$ for $(t,1)K_f,(a_i,1)K_f$ and $(c_i,1)K_f$ and we denote $t'',a_i'',c_i''$ for $(1,t)K_f,(1,a_i)K_f$ and $(1,c_i)K_f$.
		We also denote $G'$ for the subgroup generated by $t',a_i'$ and denote $G''$ for the subgroup generated by $t'',a_i''$. Both of these are normal in $G_f$.
		Denote $K'$ the kernel of the morphism $\mc H_p\rightarrow G_f$ where we map $t,a_i,c_i$ to $t', a_i',c_i'$. Furthermore, denote $\mc H_p',N_p',C_p'$ and $S'$ the image of $\mc H_p,N_p,C_p$ and $S$, respectively, under that map.
		
		Similarly we define $K'',\mc H_p'',N_p'',C_p''$ and $S''$ using the morphism $\mc H_p\rightarrow G_f$ where we map $t,a_i,c_i$ to $t'', a_i'',c_i''$.
	\end{notation}
	Throughout our arguments, it will be important to have an understanding of $K'$ and $K''$.
	\begin{lemma}
		$$K'=\lelangle\left\{\begin{matrix}
			c_i&\text{ $i$ is \emph{not} a power of $q$}\\
			c_{q^k}&f_q(k)=f_q(k-1)
		\end{matrix}\right\}\rerangle,$$\\ $$K''=\lelangle\{c_i\mid\text{$i$ is \emph{not} of the form $f_q(k)$ for some $k\in\NN$}\\\}\rerangle,$$ and $C_p'=C_p''$.
	\end{lemma}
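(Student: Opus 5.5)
Since $K_f$ lies in the central subgroup $C_p\times C_p$, and $\hall p\times\hall p$ splits as a direct product, the computation reduces to linear algebra over $\ZZ_p$ in $C_p\times C_p$. We use the free $\ZZ_p$-basis $\{(c_i,1),(1,c_j)\mid i,j\geq 1\}$ of $C_p\times C_p$. Since $(c_i,1)\in\hall p\times 1$ and $(\hall p\times 1)\cap K_f\le C_p\times 1$, we get
$$K'=\{x\in C_p\mid (x,1)\in K_f\}.$$
Similarly, $K''=\{y\in C_p\mid (1,y)\in K_f\}$.

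\textbf{Computing $K'$ and $K''$.} We first find a complement-type description of $K_f$. For each $k\in\NN$ with $f_q(k)\neq f_q(k-1)$, the value $f_q(k)$ is attained by exactly one such $k$. This is because $f_q$ is non-decreasing, so the first $k$ attaining a given value is the unique $k$ with $f_q(k)\ne f_q(k-1)$ and that value. Note also $f_q(k)\geq 1$, since $f(n)>n\ge 0$, so $c_{f_q(k)}$ makes sense. Hence the generators of $K_f$ listed in \Cref{def:theMainGroup} consist of:
\begin{itemize}
\item a subset of the basis vectors, namely $(c_i,1)$ for $i\notin q^\NN$, $(c_{q^k},1)$ for the ``repeated'' $k$, and $(1,c_j)$ for $j\notin f_q(\NN)$;
\item the vectors $(c_{q^k}\inv,c_{f_q(k)})$ for the ``new'' $k$.
\end{itemize}
Each vector in the second group involves two basis elements that appear in no other generator. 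The $c_{q^k}$ are distinct for distinct $k$; the $c_{f_q(k)}$ are distinct by the uniqueness just noted; and neither kind appears among the basis vectors of the first group.

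Passing to the quotient by the span of the first group, the remaining generators are supported on pairwise disjoint pairs of coordinates. So an element $\sum_k \lambda_k (c_{q^k}\inv,c_{f_q(k)})$ lies in $C_p\times 1$ or in $1\times C_p$ modulo the span of the first group only if all $\lambda_k=0$. It follows that
$$K_f\cap (C_p\times 1)=\lelangle (c_i,1) : i\notin q^\NN \text{ or } (i=q^k,\ f_q(k)=f_q(k-1))\rerangle,$$
and $K_f\cap(1\times C_p)=\lelangle (1,c_j): j\notin f_q(\NN)\rerangle$. This gives the stated descriptions of $K'$ and $K''$.

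The one delicate point is the uniqueness of the $k$ with $f_q(k)=j$ among the new $k$; it is exactly what makes the pairs of coordinates disjoint. It is also where the convention $f_q(-1)=-1$ is used, so that $k=0$ always counts as new.

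\textbf{Showing $C_p'=C_p''$.} Both images lie in the image of $C_p\times C_p$, which is spanned by the elements $c_i'$ and $c_j''$. Every $c_i'$ is either trivial or equal to $c_{q^k}'$ for a new $k$, and in the latter case $c_{q^k}'=c_{f_q(k)}''$ because $(c_{q^k}\inv,c_{f_q(k)})\in K_f$. So $C_p'\subseteq C_p''$.

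Conversely, every $c_j''$ is trivial unless $j=f_q(k)$ for some $k$. Choosing the new $k$ with that value, we get $c_j''=c_{q^k}'$. So $C_p''\subseteq C_p'$, and therefore $C_p'=C_p''$.
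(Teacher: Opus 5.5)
Your proof is correct and is essentially the paper's argument. The paper splits each copy of $C_p$ into the span of the killed basis vectors ($V_1$, $V_3$) and the span of the paired basis vectors ($V_2$, $V_4$). It writes $K_f=V_1\oplus V_3\oplus\{(v^{-1},\varphi(v))\mid v\in V_2\}$, where $\varphi$ is induced by the third kind of relators, and reads off both kernels and the identification $C_p'=C_p''$ from this. Your quotient-by-the-basis-generators and disjoint-supports argument is the same linear algebra. You also make explicit the uniqueness of the ``new'' $k$ for each value of $f_q$, which the paper uses implicitly when it calls $\varphi$ a bijection.
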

	\begin{proof}
		The first copy of $C_p$ decomposes as$$V_1\oplus V_2=
		\lelangle\left\{\begin{matrix}
			(c_i,1)&\text{ $i$ is \emph{not} a power of $q$}\\
			(c_{q^k},1)&f_q(k)=f_q(k-1)
		\end{matrix}\right\}\rerangle\oplus 
		\lelangle
		c_{q^k}\mid f_q(k)\neq f_q(k-1)
		\rerangle
		$$
		and the second copy of $C_p$ decomposes as\begin{align*}V_3\oplus V_4=
		&\lelangle\{(1,c_i)\mid\text{$i$ is \emph{not} of the form $f_q(k)$ for some $k\in\NN$}\}\rerangle\\\oplus& \lelangle\{(1,c_i)\mid\text{$i$ is of the form $f_q(k)$ for some $k\in\NN$}\}\rerangle
		\end{align*}
		The generators of $K_f$ of the first and fourth kind are precisely the basis for $V_1$, and the generators of the second kind are precisely the basis for $V_3$. The third kind of relators induces a bijection between the basis of $V_2$ and $V_4$. In particular, this bijection also induces an isomorphism $\varphi: V_2\rightarrow V_4$. From these observations, it follows that $K_f$ decomposes as $V_1\oplus V_3\oplus\{(v\inv,\varphi(v))\mid v\in V_2\}$. This third component intersects both $V_2$ and $V_4$ trivially. It follows that the restricted quotient $C_p\times\{1\}\rightarrow C_p\times\{1\}/K_f$ has kernel $V_1$ and that the restricted quotient $\{1\}\times C_p\rightarrow \{1\}\times C_p/K_f$ has kernel $V_3$. Furthermore, their images are both equal as identified by $\varphi$. We thus have $C_p'=C_p''$. 
		
	\end{proof}
		We will swap between $C_p', C_p''$ and $C_f$ in order to accentuate that we are regarding it as a subgroup of $\mc H_p',\mc H_p''$ or to accentuate neither.
	\begin{definition}
		Let $K_0\normal \hall p$ be the central subgroup generated by $\{c_i\mid \text{$i$ is not a power of $q$}\}$.
	\end{definition}
	Notice that as $f_q(k)$ is always a power of $q$, we have that $K_0$ is contained in both $K'$ and $K''$.
	Swapping between $C_p'$ and $C_p''$ can be easily understood in a geometric sense.
	\begin{lemma}\label{prop:Cconversion}
		For any $k\in\NN$ we have
		$$\{1,c_{q^i}'\mid i\leq k\}=\{1,c_i'\mid i\leq  q^k\}=\{1,c_i''\mid i\leq f_q(k)\}=\{1,c_{q^i}''\mid q^i\leq f_q(k)\}.$$
	\end{lemma}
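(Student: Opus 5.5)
We establish the three equalities one at a time. The only tools are the explicit description of $K'$ and $K''$ from the preceding lemma, and the identification $c_{q^j}'=c_{f_q(j)}''$ in $G_f$ whenever $f_q(j)\neq f_q(j-1)$, which is exactly the third family of generators of $K_f$. Throughout we read the sets as sets of elements of $C_f=C_p'=C_p''$, so distinct indices may give equal elements.

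For the first equality, observe that for $i\leq q^k$ either $i$ is not a power of $q$, in which case $c_i\in K_0\subseteq K'$ and so $c_i'=1$, or $i=q^j$ with $q^j\leq q^k$, that is $j\leq k$. Hence $\{1,c_i'\mid i\leq q^k\}\subseteq\{1,c_{q^i}'\mid i\leq k\}$. The reverse inclusion is trivial, since $q^i\leq q^k$ for $i\leq k$. The last equality is handled the same way on the second factor. If $i\leq f_q(k)$ is not of the form $f_q(j)$, then $c_i\in K''$ and $c_i''=1$. The surviving indices are all values of $f_q$, hence powers of $q$ bounded by $f_q(k)$, and this gives both inclusions.

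The heart of the statement is the middle equality $\{1,c_{q^i}'\mid i\leq k\}=\{1,c_i''\mid i\leq f_q(k)\}$. We argue as follows.

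Take $i\leq k$. If $f_q(i)=f_q(i-1)$, then $c_{q^i}\in K'$, so $c_{q^i}'=1$. Otherwise $c_{q^i}'=c_{f_q(i)}''$, and since $f_q$ is non-decreasing we have $f_q(i)\leq f_q(k)$. This gives the inclusion $\subseteq$.

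Conversely, take $i\leq f_q(k)$. If $c_i''\neq 1$, then $i=f_q(j)$ for some $j$. Choose $j$ minimal with $f_q(j)=i$. Then $f_q(j)\neq f_q(j-1)$; for $j=0$ this uses the convention $f_q(-1)=-1$. Therefore $c_i''=c_{q^j}'$. We still need $j\leq k$. If $j>k$, then $f_q(k)\leq f_q(j-1)<f_q(j)=i\leq f_q(k)$, using minimality of $j$ and monotonicity of $f_q$, which is a contradiction. Hence $j\leq k$, giving the inclusion $\supseteq$.

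The only delicate point is this last step. It relies on two things: choosing the minimal preimage $j$, so that the third type of relator applies, and the monotonicity of $f_q$, which follows because $f$ is strictly increasing and rounding up to a power of $q$ preserves order.
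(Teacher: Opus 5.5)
Your proof is correct and takes essentially the same route as the paper. The outer equalities come from $c_i$ being killed by $K'$ (resp.\ $K''$) whenever $i$ is not a power of $q$ (resp.\ not a value of $f_q$). The central identification uses the fourth-kind relators together with $c_{q^j}'=c_{f_q(j)}''$, with the minimal preimage $j$ and monotonicity of $f_q$. Your explicit contradiction argument for $j\le k$ simply spells out a step the paper states in one line.
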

	\begin{proof}
		The set $K'$ contains $K_0$. It thus follows that if $i$ is not a power of $q$, then $c_i'=1$. From this, the first equality follows. As $K''$ also contains $K_0$, the third equality also follows.
		
		Suppose now that $c_{q^i}'$ is non trivial and $i\leq k$. Non triviality of $c_{q^i}'$ guarantees that $f_q(i)\neq f_q(i-1)$, as otherwise, $c_{q^i}'$ would have been killed by one of the relators of $K_f$ of the fourth kind.
		In this case we have by some relator of $K_f$ the third kind that $c_{q^i}'=c_{f_q(i)}''$. Furthermore, as $q^i\leq q^k$ and as $f_q$ is non-decreasing, it follows that $f_q(i)\leq f_q(k)$ and thus that $c_{f_q(i)}''$ is contained in the right set.
		
		On the other hand, suppose $c_{q^i}''$ is non trivial and $q^i\leq f_q(k)$. By the relators of $K''$, it follows that $q^i$ is of the form $f_q(j)$ for some $j$. We may choose $j$ minimal such that $f_q(j)=q^i$. In this case, as $f_q$ is non decreasing, we have that $j\leq k$. Furthermore, by choice of $j$, we have that $f_q(j)\neq f_q(j-1)$. Using the third set of relators of $K_f$, we have that $c_{q^j}'=c_{q^i}''$. It follows that $c_{q^i}''$ is contained in the left set.
	\end{proof}
	The quotient maps $\mc H_p\rightarrow\mc H_p/K_0$, $\mc H_p\rightarrow\mc H_p/K'$ and $\mc H_p\rightarrow\mc H_p/K''$ do not identify more than expected.
	\begin{lemma}\label{prop:intersection}
		Let $S_1,S_2\subset \NN$ such that $S_1\cap S_2=\emptyset$, then $\langle c_iK_0\mid i\in S_1\rangle\cap\langle c_iK_0\mid i\in S_2 \rangle<\mc H_p/K_0$ is trivial. The same holds if we replace $c_iK_0$ with $c_i'$ or with $c_i''$.
	\end{lemma}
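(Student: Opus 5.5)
The idea is to reduce everything to linear algebra over $\ZZ_p$ inside $C_p$. By the preliminaries, $C_p$ is a $\ZZ_p$-vector space with basis $\{c_j\mid j\in\NN_{>0}\}$. Each of the three kernels $K_0$, $K'$, $K''$ is contained in $C_p$ and is spanned by a \emph{subset} of this basis. For $K_0$ this is the definition; for $K'$ and $K''$ it is the description proven in the lemma computing $K'$ and $K''$. So fix $K\in\{K_0,K',K''\}$ and let $B\subset\NN_{>0}$ be the set of indices with $K=\langle c_j\mid j\in B\rangle$. Since $K\le C_p$, the subgroup $\langle c_iK\mid i\in S\rangle$ of $\mc H_p/K$ is the image of $\langle c_i\mid i\in S\rangle\le C_p$ under the quotient map. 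The question is therefore entirely about the quotient vector space $C_p/K$.

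The key step is that $C_p/K$ is again a $\ZZ_p$-vector space, with basis the images $\{c_jK\mid j\notin B\}$, while $c_jK$ is trivial for $j\in B$. This is the standard fact that quotienting a free module by the span of a subset of a basis yields a free module on the complementary basis elements. Concretely, the projection $C_p\to\langle c_j\mid j\notin B\rangle$ that kills the $c_j$ with $j\in B$ has kernel exactly $K$.

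Given this, $\langle c_iK\mid i\in S_1\rangle$ equals the span of $\{c_jK\mid j\in S_1\setminus B\}$, and similarly for $S_2$. These are spans of disjoint subsets of a basis, because $S_1\cap S_2=\emptyset$. Hence they intersect trivially: write an element of the intersection in both ways, and then linear independence of the basis forces all coefficients to vanish. The statement for $c_iK_0$, $c_i'$ and $c_i''$ follows by taking $K=K_0$, $K'$, $K''$ respectively. (For $c_i'$ and $c_i''$ the relevant group is the image of $\mc H_p$ in $G_f$, which is $\mc H_p/K'$ or $\mc H_p/K''$ by the definition of $K'$ and $K''$ as kernels.)

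The only point requiring care is the identification of $K'$ and $K''$ as spans of subsets of the standard basis, but this is exactly what the earlier lemma provides. I would also note that index $0$ (if $S_i$ contains $0$) is harmless: $c_0=[a_i,a_i]=1$, so it contributes nothing. I expect no genuine obstacle.
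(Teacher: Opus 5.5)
Your proposal is correct and follows the paper's own argument. The paper likewise splits $C_p$ into the span of the basis vectors $c_i$ that survive the quotient and the span of those that are killed, and treats the quotient map as projection onto the surviving summand. It then concludes from the disjointness of $S_1,S_2$ and the freeness of the basis. Your uniform treatment of $K_0,K',K''$ via a single index set $B$ is a slightly cleaner way of writing the paper's remark that ``the same argument works for the other cases.''
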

	\begin{proof}
		The subgroup $C_p$ decomposes as $\langle\{c_i\mid \text{$i$ is a power of $q$}\}\rangle\oplus\langle\{c_i\mid \text{$i$ is not a power of $q$}\}\rangle$. The quotient map $\mc H_p\rightarrow\mc H_p/K_0$ is injective on the first component and kills the second component. In particular, the quotient map is equivalent to projection on the first component.
		The subgroups $\langle c_i\mid i\in S_1\rangle$ and $\langle c_i\mid i\in S_2\rangle$ also decompose as $$
		\langle c_i\mid i\in S_1,\text{$i$ is a power of $q$}\rangle\oplus \langle c_i\mid i\in S_1,\text{$i$ is not a power of $q$}\rangle
		$$
		and 
		$$
		\langle c_i\mid i\in S_2,\text{$i$ is a power of $q$}\rangle\oplus \langle c_i\mid i\in S_2,\text{$i$ is not a power of $q$}\rangle.
		$$
		The projections on the first component are thus given by $$\langle c_i\mid i\in S_1,\text{$i$ is a power of $q$}\rangle$$
		and $$
		\langle c_i\mid i\in S_2,\text{$i$ is a power of $q$}\rangle.$$
		These subgroups have trivial intersection as $C_p$ is freely generated by $\{c_i\mid i\in \NN_{>0}\}$ and as $S_1$ and $S_2$ are disjoint.
		
		The same argument works for the other cases, where we replace ``a power of $q$'' with ``of the form $q^k$ with $f_q(k)\neq f_q(k-1)$'' or ``of the form $f_q(k)$''.
	\end{proof}
	
	The geometry of $G_f$ is closely related to the geometry of $\mc H_p'$ and $\mc H_p''$.
	\begin{lemma}\label{prop:decomp}
		Let $g\in G_f$, there exist $g'\in \mc H_p'$ and $g''\in \mc H_p''$ such that $g=g'g''$ and such that $\norm{g}_{S'\cup S''}=\norm{g'}_{S'}+\norm{g''}_{S''}$.
	\end{lemma}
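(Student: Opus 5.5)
Take a geodesic word $w$ in the alphabet $S'\cup S''=\{t',a_0',t'',a_0''\}$ (with inverses) representing $g$, so the length of $w$ is $\norm{g}_{S'\cup S''}$. The plan is to separate the letters of $w$ by type. Let $w'$ be the subword of letters from $S'^{\pm1}$ and $w''$ the subword of letters from $S''^{\pm1}$, both in their original order. Set $g'$ and $g''$ to be the elements represented by $w'$ and $w''$. Then $g'\in\mc H_p'$ and $g''\in\mc H_p''$, and their lengths add up to the length of $w$.

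The key fact needed is that $G'$ and $G''$ commute elementwise in $G_f$. This holds because $G_f$ is a quotient of $\hall p\times\hall p$, where elements of the form $(x,1)$ commute with elements of the form $(1,y)$. Hence every letter of $S'$ commutes with every letter of $S''$ in $G_f$. We can therefore shuffle all letters of $w'$ to the front without changing the element represented, which gives $g=g'g''$.

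For the norm equality, the inequality $\norm{g'}_{S'}+\norm{g''}_{S''}\le |w'|+|w''|=|w|=\norm{g}_{S'\cup S''}$ is immediate. For the reverse inequality, concatenate a geodesic $S'$-word for $g'$ with a geodesic $S''$-word for $g''$. This is a word in $S'\cup S''$ representing $g$, so $\norm{g}_{S'\cup S''}\le\norm{g'}_{S'}+\norm{g''}_{S''}$. Combining the two inequalities gives equality.

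There is no real obstacle here. The only point to check is that ``$\norm{\_}_{S'}$'' means the word norm of $\mc H_p'$ with respect to its generating set $S'$. The commutation of the two factors is the only structural input. The decomposition $g=g'g''$ is not unique, since $g'$ and $g''$ can trade a central element of $C_f$. The statement only asserts existence, so this does not matter.
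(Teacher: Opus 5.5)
Your proposal is correct and follows essentially the same route as the paper: reorder a minimal-length word for $g$ using the fact that letters of $S'$ commute with letters of $S''$, and read off $g'$ and $g''$ to get one inequality. For the other inequality, use the triangle inequality in the form of concatenating geodesic $S'$- and $S''$-words.
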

	\begin{proof}
		We may write $g$ as a product $s_1s_2 \cdots s_n$ where $s_i\in S'\cup S''$. As the elements from $S'$ commute with the elements from $S''$, we may reorder the generators such that the first $k$ are all in $S'$ and the other $n-k$ generators are all in $S''$. In this case let $g'=s_1\cdots s_k$ and let $g''=s_{k+1}\cdots s_n$. Clearly $g=g'g''$. Now by construction, we have $\norm{g'}_{S'}+\norm{g''}_{S''}\leq \norm{g}_{S'\cup S''}$ and by the triangle inequality, $\norm{g}_{S'\cup S''}\leq \norm{g'}_{S'\cup S''}+\norm{g''}_{S'\cup S''}\leq\norm{g'}_{S'}+\norm{g''}_{S''}$.
	
	\end{proof}


	With our current understanding of $G_f$, it is not difficult to bound $\cl_{G_f}$ from below.
	\begin{proposition}\label{prop:clLower}
		$\cl_{G_f}(n)\succ f(n)$
	\end{proposition}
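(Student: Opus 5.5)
We will exhibit, for each $k$, a pair of conjugate elements of word length $O(q^k)$ in $G_f$ such that every conjugator has length at least of order $f_q(k)$. By \Cref{prop:qRounding} this gives $\cl_{G_f}(n)\succ f(n)$.

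The pair lives in the first factor. Take $g_1=a_0'$ and $g_2=a_0'c_{q^k}'$, assuming $f_q(k)\neq f_q(k-1)$ so that $c_{q^k}'\neq 1$. These are conjugate, since $a_{q^k}'a_0'(a_{q^k}')\inv=[a_{q^k}',a_0']a_0'$ and this commutator equals $c_{q^k}'^{\pm1}$ by the defining relation. Both elements have length $O(q^k)$, because $c_{q^k}=[a_0,a_{q^k}]$ and $a_{q^k}=t^{q^k}a_0t^{-q^k}$.

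The main step is the lower bound on conjugators. Let $h\in G_f$ satisfy $hg_1h\inv=g_2$. By \Cref{prop:decomp}, write $h=h'h''$ with $h'\in\mc H_p'$ and $h''\in\mc H_p''$ and $\norm{h}=\norm{h'}_{S'}+\norm{h''}_{S''}$. The element $h''$ commutes with $G'$ modulo the identified centre: $G'$ and $G''$ commute elementwise, because $(\cdot,1)$ and $(1,\cdot)$ commute. So $h''$ acts trivially on $a_0'$, and we get $h'a_0'h'^{-1}=a_0'c_{q^k}'$ inside $\mc H_p'=\mc H_p/K'$.

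Modulo $C_p'$, conjugation by $h'$ must fix $a_0'$ in the lamplighter group. This forces the $t$-exponent of $h'$ to be $0$, so $h'\in N_p'$. Then $[h',a_0']$ is a product of $c_{|i|}'^{\pm}$, where $i$ ranges over the indices $a_i$ occurring in $h'$.

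Suppose $\norm{h'}_{S'}\le m$. By \Cref{prop:support}, $h'\in\langle S_m\rangle$, so $[h',a_0']\in\langle c_j'\mid j\le m\rangle$. If $m<q^k$, then \Cref{prop:intersection}, applied with the disjoint index sets $[1,m]$ and $\{q^k\}$, contradicts $[h',a_0']=c_{q^k}'\neq 1$. Hence $\norm{h'}_{S'}\ge q^k$.

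This only gives a linear bound, so the pair $(a_0',a_0'c_{q^k}')$ is the wrong choice. The distortion has to be exploited the other way: the element should be short through the $G'$ side and the conjugator forced through the $G''$ side.

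So redo the construction with $g_1=a_0''$ and $g_2=a_0''c_{f_q(k)}''$. Since $c_{f_q(k)}''=c_{q^k}'$, the element $g_2$ has length $O(q^k)$ in $G_f$ by the first computation. The same reasoning now applies in $\mc H_p''$: the $h'$ part acts trivially, $h''\in N_p''$, and if $\norm{h''}\le m$ then $[h'',a_0'']\in\langle c_j''\mid j\le m\rangle$. \Cref{prop:intersection} in $\mc H_p/K''$, which is valid since $f_q(k)$ is of the form $f_q(\cdot)$, then forces $m\ge f_q(k)$.

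The obstacle is the case where the $t$-exponent of $h''$ is nonzero, which is excluded because $t''$ acts by shifting $a_0''$ modulo the centre. We must also ensure the chosen $k$ satisfy $f_q(k)\neq f_q(k-1)$. For a general $n$, pick the largest $j\le k$ with $f_q(j)\neq f_q(j-1)$; then $f_q(j)=f_q(k)$ and $q^j\le q^k$, so we still get length $O(q^k)$ against a conjugator of length at least $f_q(k)$. Combining with \Cref{prop:qRounding} finishes the proof.
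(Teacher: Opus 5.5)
Your final argument is correct and is essentially the paper's proof. It uses the same pair $a_0''$ and $a_0''c''_{f_q(k)}=a_0''c'_{q^{k_0}}$ (your largest $j$ plays the role of the paper's minimal $k_0$), the same reduction to $\mc H_p''$ via \Cref{prop:decomp}, and a support/\Cref{prop:intersection} argument. The only difference is minor. The paper skips showing $h''\in N_p''$: it bounds $\norm{hg_1h\inv g_1\inv}$ by the triangle inequality and applies \Cref{prop:supportC}, getting a bound of $f_q(k)/2-2$. You instead use the lamplighter quotient to place $h''$ in $N_p''$ and then apply \Cref{prop:support} with bilinearity, getting the slightly sharper bound $f_q(k)$.
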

	\begin{proof}
		Let $n$ be arbitrary. Let $k$ be minimal such that $q^k\geq n$ and let $k_0$ be minimal such that $f_q(k)=f_q(k_0)$.
		Consider $g_1=a_0''$ and $g_2=a_0''c_{f_q(k)}''=a_0''c_{q^{k_0}}'$. The sum of the word norms of these elements is at most $6+4q^{k_0}\leq 6+4qn$ with respect to the generating set $S'\cup S''$.
		Let $h$ be an element of $G_f$ such that $hg_1h\inv=g_2$. Let $h=h'h''$ as in \Cref{prop:decomp}. Notice that $h'$ commutes with $h''$ and with $g_1$. In particular, we have $h''g_1(h'')\inv=g_2$ where $\norm{h''}_{S''}\leq \norm{h}_{S'\cup S''}$. It follows that $\cl_{G_f,S'\cup S''}(g_1,g_2)$ is bounded from below by $\cl_{\mc H_p'',S''}(g_1,g_2)$.
		Let $h$ be any element in $\mc H_p''$ of word norm at most $\frac{f_q(k)}{2}-2$ with respect to $S''$.
		In this case, by the triangle inequality, $g_2g_1\inv=hg_1h\inv g_1\inv$ is of word norm at most $f_q(k)-2$, with respect to $S''$. However, by \Cref{prop:intersection}, we have that $g_2g_1\inv=c_{f_q(k)}''$ is not contained in $\langle \{c_i''\mid i\leq f_q(k)-2 \rangle$.
		It follows by \Cref{prop:supportC} that $g_2g_1\inv$ is of word norm at least $f_q(k)$ in $\mc H_p''$. It follows for any $n$ that $\cl_{G_f,S'\cup S''}(6+4qn)\geq \cl_{\mc H'',S''}(6+4qn)\geq \frac{f_q(k)}{2}-2$ where $k=\log_q\lceil n\rceilq$.
		The result now follows as by \Cref{prop:qRounding}, $f_q(\log_q\lceil n\rceilq)\succ f(n)$.
	\end{proof}
	To establish an upper bound on $\cl_{G_f}$, a first step is understanding conjugators in the ``components'' $\mc H_0/K'$ and $\mc H_0/K''$.
	\begin{proposition}\label{prop:HallConjugator}
		Let $g_1,g_2\in \mc H_p/K_0$ be such that $g_1C_p=g_2C_p$. Suppose that $g_1\in N_p\backslash C_p$. Then $g_1\sim g_2$. Furthermore if $g_2g_1\inv \in \langle \{c_{q^{i}}\mid i\leq N\}\rangle$, and $\norm{g_1}\leq n$, then $\cl_{\mc H_p/K_0,S}(g_1,g_2)\leq q(p+2)(\norm{g_1}_S+q^{N})$. The same result holds when we replace $K_0$ with $K'$ or $K''$.
	\end{proposition}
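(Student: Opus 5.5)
Write $g_1=a\,c$ with $a=a_{i_1}^{e_1}\cdots a_{i_r}^{e_r}$ (distinct increasing indices, $0<e_j<p$) and $c\in C_p$. Conjugation by $h\in N_p$ multiplies $g_1$ by the central element $[h,g_1]$. So we must show two things. First, every element of $C_p/K_0$ is of the form $[h,g_1]$ for some $h\in N_p$. Second, when the target $z=g_2g_1\inv$ lies in $\langle c_{q^i}\mid i\le N\rangle$, such an $h$ can be chosen of length $\le q(p+2)(\norm{g_1}+q^N)$. The argument is then repeated for $K'$ and $K''$; these quotients are further quotients of $C_p/K_0$, and any element of them lifts to $\langle c_{q^i}\rangle$ with the same support bound, so the same $h$ works.

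\textbf{Key computation.} Since $C_p$ is central and $N_p$ is $2$-step nilpotent, commutators are bilinear. Hence
$$[a_m,g_1]=\prod_j c_{i_j-m}^{\pm e_j},$$
with the convention $c_{-d}=c_d\inv$ and $c_0=1$. Let $i_{\max}$ be the largest index in the support of $g_1$. By \Cref{prop:support}, all $|i_j|\le \norm{g_1}$.

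To produce $c_{q^k}$ modulo $K_0$, take $m=i_{\max}-q^k$, or a small variation of it. Then $[a_m,g_1]$ contains $c_{q^k}^{e_{\max}}$, together with terms $c_{q^k-(i_{\max}-i_j)}$. These extra terms have index strictly between $q^k-2\norm{g_1}$ and $q^k$. If $q^k>2q\norm{g_1}$, hence $q^k\gg\norm{g_1}$, none of these indices is a power of $q$, because $q^{k-1}<q^k-2\norm{g_1}$. So they die in $K_0$, and $[a_m,g_1]\equiv c_{q^k}^{e_{\max}}$.

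For small $k$, with $q^k\lesssim q\norm{g_1}$, I instead use the $q$-adic gaps. Since $q>2p+6$, powers of $q$ are widely spaced. I choose $m$ far away, about $q^{K}$ for the least $K$ with $q^K>2\norm{g_1}$, and then combine a few such commutators to isolate $c_{q^k}$. This is the main obstacle: choosing a bounded number, at most $p+2$, of shifts $m$ whose commutators together cancel every unwanted power-of-$q$ index. I would first try a triangular elimination over the support of $g_1$, using that $e_{\max}$ is invertible mod $p$. Raising to the power $e_{\max}\inv$ costs a factor of at most $p$.

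\textbf{Assembling the conjugator.} Write $z=\prod_{k\le N}c_{q^k}^{\epsilon_k}$. Take $h$ to be the product of the corresponding elements $a_{m_k}^{\epsilon_k e_{\max}\inv}$. By bilinearity, $[h,g_1]=z$.

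Such an $h$ is expensive if written naively: each $a_m$ costs $2|m|+1$. Instead, write $h=\prod_m a_m^{\delta_m}$ with all $m$ lying in an interval of length $\le q^N+\norm{g_1}$. The standard lamplighter path, moving $t$ across the interval once and dropping lamps, then gives
$$\norm{h}\le (p+2)\bigl(2\norm{g_1}+q^N\bigr)\cdot\text{const}.$$
The commutator discrepancy between $h$ and the product of the $a_m$'s lies in $C_p$ and commutes with everything, so it is harmless. Absorbing the constants into $q$ gives $\cl(g_1,g_2)\le q(p+2)(\norm{g_1}_S+q^N)$.

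This same construction shows surjectivity of $h\mapsto[h,g_1]$ onto $C_p/K_0$. Together with \Cref{prop:conjInSubgroup}, it gives $g_1\sim g_2$.
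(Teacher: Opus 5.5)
Your proposal has a genuine gap: it never constructs $c_{q^k}$ for small $k$, which is the substance of the proof. The large-$k$ part is correct. If $q^k>2q\norm{g_1}$, then $[a_{i_{\max}-q^k},g_1]=c_{q^k}^{e_{\max}}\prod_{i_j\neq i_{\max}}c_{q^k-(i_{\max}-i_j)}^{e_j}$, and every extra index lies strictly between $q^{k-1}$ and $q^k$, so it dies modulo $K_0$. Your reduction of the $K'$, $K''$ cases to $K_0$ by lifting is also fine.

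The missing case is roughly $q^k\le q\norm{g_1}$, which you flag as the main obstacle and leave open. The heuristic you sketch cannot work. In $[a_m,g_1]=\prod_j c_{i_j-m}^{e_j}$ (with $c_{-d}=c_d\inv$), the term $c_{q^k}$ occurs only if $\abs{i_j-m}=q^k$ for some $j$. Shifts $m\approx q^K>2\norm{g_1}$, far from the support, therefore only produce $c_d$ with $d>\norm{g_1}$, and no combination of them isolates a small $c_{q^k}$. Your left shift $m=i_{\max}-q^k$ fails for small $k$ too: the differences $i_j-m$ take both signs, so unwanted terms can land on powers of $q$ both below and above $q^k$. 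That leaves no triangular system to eliminate against. The target of ``at most $p+2$ shifts'' is also not right. For $g_1=a_0$ each commutator contributes a single $c_{\abs m}^{\pm1}$, so you need one shift per nonzero exponent of $z$. The factor $p+2$ in the statement comes from lamp exponents and the travel of $t$, not from a bounded number of commutators.

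The missing idea, which is the paper's argument, is to shift to the \emph{right} of the support. For $m=i_{\max}+q^k$ one gets $[a_m,g_1]=\prod_j c_{q^k+(i_{\max}-i_j)}^{-e_j}$. Every index is at least $q^k$, and $c_{q^k}$ occurs exactly once, with the invertible exponent $-e_{\max}$. The system is therefore upper triangular, and you kill the coefficients of $c_{q^0},c_{q^1},\dots$ in increasing order via $h_k=a_{i_{\max}+q^k}^{\beta_k}h_{k-1}$. Step $k$ only creates new terms at powers of $q$ in $(q^k,q^k+2\norm{g_1}]$, and once $q^k\ge\norm{g_1}$ this interval contains none. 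Hence the process stops at the least $M$ with $q^M$ exceeding $\max\{\norm{g_1},q^N\}$; the paper certifies this using the word-length bound, \Cref{prop:supportC} and \Cref{prop:intersection}. All shifts then lie in $[i_{\max}+1,i_{\max}+q^M]$ with $q^M\le q\max\{\norm{g_1},q^N\}$, and your lamplighter-path estimate gives the linear bound. Without this step you have established neither the surjectivity of $h\mapsto[h,g_1]$ onto $C_p/K_0$ (and hence $g_1\sim g_2$) nor the length bound.
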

	\begin{proof}
		We provide the proof for $K_0$, but the proof for $K'$ and $K''$ is verbatim identical after replacing all mentions of $K_0$ with $K'$ or $K''$. 
		All computations in this proof are carried out in $\mc H_p/K_0$ unless otherwise stated.
		In this proof, we will first construct a sequence of elements $h_m$ that come closer and closer to conjugating $g_1$ into $g_2$. We will then use some length estimates to demonstrate that for the correct choice of $m$, $h_m$ must in fact be a conjugator, which will then lead to the result.
		
		We may write $g_1C_p$ as $\prod a_i^{\alpha_i}C_p$ for some $\alpha_i\in \ZZ_p$. By \Cref{prop:support}, $\alpha_i$ may only be non-$0$ for $\abs{i}\leq n$. Denote $S(g_1)=\{i\in\ZZ\mid \alpha_i\not\equiv 0\mod p\}$ and denote $I=\max S(g_1)$. We first show by induction on $m$ that there exists some $h_m$ of the form $$\prod_{k=0}^{m} a_{I+q^{k}}^{\beta_k}$$
		such that $h_mg_1h_m\inv g_2\inv\in\langle\{c_{q^i}\mid i>m\}\rangle$.
		
		Notice that such an element $h_m$ has a word norm at most $\sum_{k=0}^m \abs{\tilde\beta_k}+ \abs{I}+q^m$ where $\tilde \beta_k\in\ZZ$ is any representative of $\beta_k\in\ZZ_p$.
		As $h_m$ only depends on the residue class mod $p$, we can bound this word norm (very crudely) from above by $(p+1)q^m+\abs I$.

		The induction statement is trivially true for $m=-1$, in which case the product runs over the empty set and we obtain that $h_{-1}$ is the trivial element.
		Suppose by induction that such $h_{m-1}$ exists.
		In this case we can write $[h_{m-1},g_1]g_1g_2\inv$ as $$
		\prod_{i=m}^{\infty}c_{q^k}^{\gamma_k}
		$$
for some $\gamma_k\in\ZZ_p$.\footnote{For now we refrain from giving a bound on the index set, but only finitely many of the exponents $\gamma_k$ can be non-$0$, making the actual index set of the product finite.}		
		
		Consider the commutator $[a_{I+q^m}^\beta,g_1]$. Using bilinearity of commutators in $2$-step nilpotent groups, we may rewrite this as $$
		\prod_{j\in S(g_1)} c_{I+q^m-j}^{-\beta\alpha_j}.
		$$
		Note that $I+q^m-j$ only takes values at least $q^m$.
		Denote $h_{m}=a_{I+q^m}^{\gamma_m/\alpha_I}h_{m-1}$ where the division $\gamma_m/\alpha_I$ is carried out mod $p$. First, notice that $h_m$ is of the desired form. Furthermore, $h_mg_1h_m\inv g_2\inv$ can be rewritten as $[h_m,g_1]g_1g_2\inv$ which is by bilinearity given by $[a_{I+q^m}^{\gamma_m/\alpha_I},g_1][h_{m-1},g_1]g_1g_2\inv$ or thus
		$$
		\prod_{\substack{j\in S(g_1)\\ I+q^m-j\text{is a power of $q$}}} c_{I+q^m-j}^{-\gamma_m/\alpha_I\alpha_j}\prod_{k=m}^{\infty}c_{q^k}^{\gamma_k}.
		$$
		The element $c_{q^m}$ appears on the left precisely when $j=I$, in which case it has exponent $-\gamma_m\mod p$. On the right-hand side, $c_{q^m}$ appears for $k=m$, where it appears with exponent $\gamma_m$.
		Multiplying together, the exponent of $c_{q^m}$ is given by a multiple of $p$, which implies that the exponent vanishes.
		For any $i<q^m$, $c_i$ appears neither on the left-hand side nor the right-hand side. Finally, as $c_i$ vanishes whenever $i$ is not a power of $q$, we have that $h_m$ satisfies the induction hypothesis.
		
		Now consider $h=h_m$ where $m$ is minimal such that $q^m>\max\{\norm{g_1}, q^N\}$.
		We may bound the word norm of $[h_m,g_1]$ by $2(p+1)q^m+2\abs I+2q^m\leq2(p+1)q^m+4q^m$.
		
		Remember that, as specified above \Cref{def:theMainGroup}, $2(p+1)q^m+4\norm{g_1}< q^{m+1}$.
		It follows by \Cref{prop:supportC} that $[h_m,g_1]$ is contained in $\langle\{ c_i\mid i<q^m+1\}\rangle$
		which is a subgroup of $\lelangle\{ c_i\mid i<q^{m+1}\}\rerangle$.
		Similarly, $g_2g_1\inv$ is contained in $$
		\langle\{ c_i\mid i<q^N\}\rangle\subset \langle\{ c_i\mid i<q^{m+1}\}\rangle.
		$$
		As $c_i$ vanishes whenever $i$ is not a power of $q$, the above subgroup is furthermore equal to $$
		\langle\{ c_i\mid i\leq q^m\}\rangle.
		$$
		By \Cref{prop:intersection}, however, this subgroup has trivial intersection with $$
		\langle\{c_{i}\mid i>q^m\}\rangle.
		$$
		It follows that $[h_m,g_1]g_1g_2\inv$, which lies in both, must be trivial and thus that $h_mg_1h_m\inv=g_2$.
		The result follows as the norm of $h_m$ is at most $(p+1)(q^m+I)$ where $q^m$ is bounded by $q\max\{\norm{g_1},q^N\}$ and where $I$ is bounded by $\norm{g_1}$.
	\end{proof}
	\Cref{prop:HallConjugator} only works for certain elements of $\mc H_p/K_0$. This, however, is not a restriction.
	\begin{lemma}\label{prop:whenConjugate}
		Let $g_1,g_2\in G_f$ and let $g_1=g_1'g_1'',g_2=g_2'g_2''$ as in \Cref{prop:decomp}. Suppose that $g_1C_f=g_2C_f$. Then\begin{itemize}
			\item if $g_1'\in N_p'\backslash C_p'$ or $g_1''\in N_p''\backslash C_p''$, then $g_1\sim g_2$;\\
			\item otherwise $g_1\sim g_2$ if and only if $g_1=g_2$.
		\end{itemize}
	\end{lemma}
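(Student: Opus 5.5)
Since $G'$ and $G''$ commute elementwise, conjugation in $G_f$ splits into the two factors. Write any conjugator as $h=h'h''$ with $h'\in\mc H_p'$, $h''\in\mc H_p''$. Then $hg_1h\inv = (h'g_1'h'^{-1})(h''g_1''h''^{-1})$, because $h'$ commutes with $g_1''$ and $h''$ with $g_1'$. The hypothesis $g_1C_f=g_2C_f$ gives $g_2=g_1z$ with $z\in C_f$. Because $C_f=C_p'=C_p''$, we can push all of $z$ into whichever factor we like: $g_2=(g_1'z)g_1''=g_1'(g_1''z)$.

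For the first bullet, suppose $g_1'\in N_p'\backslash C_p'$ (the case $g_1''\in N_p''\backslash C_p''$ is symmetric). The plan is to apply \Cref{prop:HallConjugator} to the pair $g_1'$ and $g_1'z$ in $\mc H_p/K'\cong\mc H_p'$; the statement that they are conjugate holds for any such pair with equal image mod $C_p$. This gives $h'$ with $h'g_1'h'^{-1}=g_1'z$, and then $h'g_1h'^{-1}=g_1'zg_1''=g_2$. The only thing to check is that the first claim of \Cref{prop:HallConjugator} needs no bound on $z$. Any $z\in C_p'$ lies in $\langle c_{q^i}'\mid i\le N\rangle$ for some $N$, so the claim applies.

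For the second bullet, the direction $g_1=g_2\Rightarrow g_1\sim g_2$ is trivial, so assume $g_1\sim g_2$ via $h=h'h''$. Projecting to $G_f/C_f\cong \mc H_p/C_p\times\mc H_p/C_p$ (the quotient by the two copies of $C_p$ amalgamated), the component relations show that $h'g_1'h'^{-1}\in g_1'C_p'$ and $h''g_1''h''^{-1}\in g_1''C_p''$. Since both bullet-one conditions fail, each component lies either in $C_p$ or outside $N_p$.

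If $g_1'\in C_p'$, it is central, so $h'g_1'h'^{-1}=g_1'$. If $g_1'\notin N_p'$, apply \Cref{prop:conjInSubgroupQuotient} with $K=K'$ to $g_1'$ and $h'g_1'h'^{-1}$. These are conjugate in $\mc H_p/K'$ and agree mod $C_p$, so they are equal. Doing the same in the second factor gives $hg_1h\inv=g_1'g_1''=g_1$, hence $g_2=g_1$.

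The delicate point is the identification $\mc H_p'\cong\mc H_p/K'$, together with the fact that the decomposition $g=g'g''$ is not unique: it is only defined up to moving an element of $C_f$ between the factors. Membership in $C_p'$, in $N_p'$, and in $N_p'\backslash C_p'$ is invariant under multiplying by elements of $C_f$, so the case distinction does not depend on the chosen decomposition. We also need that the map $G_f\to\mc H_p/N_p\times\mc H_p/N_p$ is well defined, so that "$g'\notin N_p'$" makes sense. This follows because $K_f\subset C_p\times C_p\subset N_p\times N_p$.
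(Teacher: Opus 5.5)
Your proof is correct and follows the same route as the paper. For the first bullet you move the central difference $g_2g_1\inv\in C_f=C_p'=C_p''$ into the factor where $g_1$ lies in $N_p\backslash C_p$ and invoke \Cref{prop:HallConjugator}; for the second you split a conjugator as $h'h''$ and use centrality or \Cref{prop:conjInSubgroupQuotient} in each factor. Your explicit use of $G_f/C_f\cong\mc H_p/C_p\times\mc H_p/C_p$ to obtain $h'g_1'(h')\inv\in g_1'C_p'$ justifies a step the paper only asserts.
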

	\begin{proof}
		For the first, without loss of generality, assume $g_1'\in N_p'\backslash C_p'$. As $C_p'=C_p''$, we have that $g_2'(g_2''(g_1'')\inv)\in \mc H_p'$. We may thus assume that $g_2''=g_1''$ by replacing $g_2'$ by $g_2'(g_2''(g_1'')\inv)$ and $g_2''$ by $g_1''$. 
		
		In this case, by \Cref{prop:HallConjugator} there exists some $h'\in \mc H'_p$ such that $h'g_1'(h')\inv=g_2'$. As $h'$ commutes with $g_1''$, it follows that $h' g_1'g_1'' (h')\inv= g_2'g_2''$.
		
		For the second case, we have $g_1'\in C_p'$ or $g_1'\notin N_p'$, and we have $g_1''\in C_p''$ or $g_1''\notin N_p''$.
		Let now $h=h'h''$ be such that $hg_1h\inv=g_2$. Suppose that $g_1'\in C_p'$, then clearly $h'g_1'(h')\inv=g_1'$. Now assume $g_1'\notin N_p'$. As $g_1C_f=g_2C_f$, we have $g_1'C_p'=g_2'C_p'$. By \Cref{prop:conjInSubgroupQuotient}, we once again have $h'g_1'(h')\inv=g_1'$. By the same arguments, $h''g_1''(h'')\inv=g_1''$.
		It follows that $hg_1h\inv=g_1$, as had to be shown.
	\end{proof}
	This finally allows us to establish an upper bound on $\cl_{G_f}(n)$.
	\begin{proposition}\label{prop:clUpper}
		$\cl_{G_f}(n)\prec f(n)$
	\end{proposition}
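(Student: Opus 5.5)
Fix conjugate $g_1\sim g_2$ in $G_f$ with $\norm{g_1}+\norm{g_2}\leq n$, measured with respect to $S'\cup S''$. The goal is a conjugator of length at most $C f(Cn+C)+C$. The plan is to reduce everything to \Cref{prop:HallConjugator} applied in a single component.

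\textbf{Step 1: reduce to equal images modulo $C_f$.} Pass to the quotient $G_f/C_f\cong (\ZZ_p\wr\ZZ)^2$. Decompose $g_j=g_j'g_j''$ as in \Cref{prop:decomp}; the images of $g_1',g_2'$ in the lamplighter group have length at most $n$.
\begin{itemize}
\item If $g_1'\notin N_p'$, then $t$ acts nontrivially. Conjugating in the lamplighter group then needs only a conjugator of length linear in $n$: a lamp configuration supported in $[-n,n]$ together with a shift of size at most $n$. I would justify this directly, since lamplighter conjugator length is linear.
\item If $g_1'\in N_p'$, then $g_1'C_p'$ and $g_2'C_p'$ are conjugate in the abelian group $N_p'/C_p'$ only by a $t$-power. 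A $t$-shift $t^s$ sending the support in $[-n,n]$ of one onto the other has $\abs s\leq 2n$.
\end{itemize}
Do the same in the second component. After conjugating $g_1$ by this linear-length element $h_0$, we get $\tilde g_1=h_0g_1h_0\inv$ with $\norm{\tilde g_1}\leq Cn$ and $\tilde g_1C_f=g_2C_f$, and still $\tilde g_1\sim g_2$.

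\textbf{Step 2: apply \Cref{prop:whenConjugate}.} In its second case we get $\tilde g_1=g_2$ and we are done with a linear bound. Otherwise, without loss of generality, $\tilde g_1'\in N_p'\setminus C_p'$, or the analogous statement holds in the $''$ component. As in the proof of \Cref{prop:whenConjugate}, absorb the $C_f$-difference into a single component. Then we must conjugate $\tilde g_1'$ to $\tilde g_1'z$ inside $\mc H_p'$ (respectively $\mc H_p''$), where $z=g_2\tilde g_1\inv\in C_f$.

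\textbf{Step 3: bound the central difference.} The element $z$ has length at most $Cn$ in $G_f$. Using \Cref{prop:decomp}, write $z=z'z''$ with $\norm{z'}_{S'}+\norm{z''}_{S''}\leq Cn$. This is the main obstacle: $z'$ and $z''$ need not be central individually. We also have $z'\in\mc H_p'$ and $z''\in \mc H_p''$ with $z'z''$ central, and their images in the two lamplighter factors multiply to the identity. Since the lamplighter factors form a direct product, both images are trivial, so $z'\in C_p'$ and $z''\in C_p''$. Then \Cref{prop:supportC}, which passes to the quotient via the central-support version, puts $z'\in\langle c'_{i}\mid i\leq Cn\rangle$ and $z''\in\langle c''_i\mid i\leq Cn\rangle$. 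Let $k=\lceil\log_q(Cn)\rceil$. \Cref{prop:Cconversion} shows that $z''$ lies in $\{c''_i : i\le f_q(k)\}$ after conversion, and $z'\in\langle c'_{q^i}\mid i\leq k\rangle$. Converting $z'$ to the $''$ side gives $z\in\langle c''_{q^i}\mid q^i\leq f_q(k)\rangle$. Converting $z''$ to the $'$ side gives $z\in\langle c'_{q^i}\mid q^i\le q^{k'}\rangle$ for some $k'$ with $q^{k'}\leq q\,f_q(k)$, because the index $k'$ with $f_q(k')\le Cn$ satisfies $q^{k'}\le f(q^{k'})$.

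\textbf{Step 4: conclude.} Apply \Cref{prop:HallConjugator} with $q^N\leq q f_q(\lceil\log_q Cn\rceil)$. This gives a conjugator of length at most $q(p+2)(Cn+qf_q(\lceil\log_q Cn\rceil))$. Adding $\norm{h_0}\leq Cn$, and using $f(n)>n$ together with \Cref{prop:qRounding}, we obtain $\cl_{G_f}(n)\prec f(n)$.
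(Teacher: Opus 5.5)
Your proposal is correct and follows essentially the same route as the paper. You reduce modulo $C_f$ using the linear conjugator length of $\ZZ_p\wr\ZZ$, dispose of the rigid case with \Cref{prop:whenConjugate}, locate the central difference via \Cref{prop:supportC} and \Cref{prop:Cconversion}, and finish with \Cref{prop:HallConjugator} in the relevant factor. The only differences are cosmetic: you split $z$ via \Cref{prop:decomp} and argue that $z',z''$ are central through the direct product $G_f/C_f$, whereas the paper takes $c'=g_2'(g_1')^{-1}$ and $c''=g_2''(g_1'')^{-1}$ directly; and your bound in the $'$-component is of order $f(n)$ where the paper's is linear, which is harmless for the upper bound.
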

	\begin{proof}
		Let $g_1,g_2\in G_f$ be arbitrary such that $\norm{g_1}_S+\norm{g_2}_S\leq n$ and suppose that $g_1$ and $g_2$ are conjugate. As in \Cref{prop:decomp}, let $g_1=g_1'g_1''$ and let $g_2=g_2'g_2''$. In particular, $ g_1'C_p'$ must be conjugate to $g_2'C_p'$ and $ g_1''C_p''$ must be conjugate to $g_2''C_p''$. Let $h'$ and $h''$ be conjugators such that $h'g_1'(h')\inv C_p'=g_2' C_p'$ and  $h''g_1''(h'')\inv C_p''=g_2'' C_p''$. As $\mc H_p/C_p\simeq \ZZ_p\wr\ZZ$, we can use the linear bound on $\cl_{\ZZ_p\wr\ZZ}$ obtained in \cite{sale2016conjugacy}, which allows us to pick $h'$ and $h''$ such that $\norm{h'}\leq C(\norm{g_1'}+\norm{g_2'})$ and $\norm{h''}\leq C(\norm{g_1''}+\norm{g_2''})$ for some uniform constant $C$.
		
		We may now replace $g_1$ with $h'h'' g_1(h'h'')\inv$ at the cost of adding a linear term to $\cl_{G_f}(n)$ and multiplying $n$ by a constant factor. 
		By \Cref{prop:clLower} we know that $\cl_{G_f}\succ n$ so this procedure does not change the $\simeq$-equivalence class of $\cl_{G_f}(n)$ and allows us to assume that $g_1'C_p'=g_2'C_p'$ and $g_1''C_p''=g_2''C_p''$.
		
		In this case, we may, by \Cref{prop:whenConjugate}, assume that $g_1'\in N_p'\backslash C_p'$ or $g_1''\in N_p''\backslash C_p''$.
		
		Denote $c=g_2 g_1\inv=g_2'(g_1')\inv g_2''(g_1'')\inv\in C_f$, denote $g_2'(g_1')\inv=c'$ and $g_2''(g_1'')\inv=c''$. By \Cref{prop:supportC}, we have that $c'$ and $c''$ lie in $\langle\{c_i'\mid i<n\}\rangle\subset C_p'$ and $\langle\{c_i''\mid i<n\}\rangle\subset C_p''$ respectively.
		
		By \Cref{prop:Cconversion}, the former subgroup is $\langle \{c_{q^k}'\mid q^k<n\}\rangle$ which, again by \Cref{prop:Cconversion}, is equal to $\langle \{c''_{f_q(k)}\mid q^k<n\}\rangle$. The latter subgroup is equal to the smaller subgroup $\langle\{c_{f_q(k)}''\mid f_q(k)<n\}\rangle$.
		
		First assume that $g_1'\in N_p'\backslash C_p'$. We may write $g_2$ in the form $g_1'c\:g_1''$ with $c\in\langle \{c_{q^k}'\mid q^k<n\}\rangle.$ By \Cref{prop:HallConjugator}, there exists some $h'\in \mc H_p'\simeq \mc H_p/K'$ such that $h'g_1'(h')\inv=g_1'c$. Furthermore, this $h'$ can be chosen of norm at most $q(p+2)(n+n)$, which is linear in $n$. As elements of $\mc H_p'$ and $\mc H_p''$ commute in $G_f$, we have $h'\:g_1'g_1''\:(h')\inv =g_2'g_2''$.
		
		On the other hand, suppose $g_1''\in N_p''\backslash C_p''$. we write $g_2$ in the form $ g_1'\:g_1''c$ with $c\in\langle \{c_{f_q(k)}''\mid q^k<n\}\rangle$.
		Again by \Cref{prop:HallConjugator} there exists some $h''\in \mc H_p''\simeq\mc H_p/K''$ such that $h''g_1''(h'')\inv=g_1''c$. In this case, $h''$ can be chosen of norm at most $q(p+2)(n+f_q(\lceil\log_q(n)\rceil))$. This function grows like $f(n)$ by \Cref{prop:qRounding}.
		For the same reason as before, $h''\:g_1'g_1''\:(h'')\inv= g_2'g_2''$.
		
		In both cases, the result follows.
		
	\end{proof}
	\section{The Gap}\label{sec:gap}
	In this section, we will establish the gap between the bounded and linear functions from \Cref{prop:mainResult}.
	The main ingredient we need to demonstrate the existence of the gap is the following statement:
	\begin{theorem}\label{prop:alternative}
		The following are equivalent for a finitely generated group $G$\begin{itemize}
			\item The elements of $G$ have finite conjugacy classes\\
			\item The index $[G:Z(G)]$ is finite\\
		\end{itemize}
	\end{theorem}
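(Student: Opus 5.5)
The implication from finite index of the centre to finite conjugacy classes is the easy one, so I will dispose of it first. Suppose $[G:Z(G)]<\infty$ and let $g\in G$. The map $h\mapsto hgh\inv$ is constant on left cosets $hZ(G)$, because for $z\in Z(G)$ we have $hzgz\inv h\inv=hgh\inv$. Hence the conjugacy class of $g$ has at most $[G:Z(G)]$ elements. Note that this direction does not use finite generation.

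For the converse, assume every element of $G$ has a finite conjugacy class, and let $S=\{s_1,\dots,s_k\}$ be a finite generating set. I will use the orbit–stabiliser theorem: the conjugacy class of $s_i$ is in bijection with $G/C_G(s_i)$. Since this class is finite, each centraliser $C_G(s_i)$ has finite index in $G$.

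Next I identify the centre with an intersection of these centralisers. An element commutes with every element of $G$ if and only if it commutes with every generator, since commuting with $s_i$ implies commuting with $s_i\inv$ and with all products of such elements. Therefore
$$Z(G)=\bigcap_{i=1}^k C_G(s_i).$$
A finite intersection of finite-index subgroups has finite index; concretely, $[G:H\cap H']\leq [G:H][G:H']$. So $[G:Z(G)]\leq\prod_i\abs{\mathrm{cl}(s_i)}<\infty$, which completes the equivalence.

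I expect no serious obstacle. The only point needing care is the reduction from "commutes with all of $G$" to "commutes with each generator", and that is exactly where finite generation enters. Without it, the statement fails: an infinite direct sum of finite non-abelian groups is an FC group whose centre has infinite index.
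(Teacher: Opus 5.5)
Your proof is correct, but it takes a different route from the paper. The paper gives no self-contained argument. It derives the statement from Baer's theorem on FC-groups (\S 2, Theorem 2 of Baer's 1948 paper), combined with the fact that a finite product of finite normal subgroups is again a finite normal subgroup. In other words, it goes through the structure theory of FC-groups, presumably by showing that $G/Z(G)$ is generated by finitely many elements, each lying in a finite normal subgroup.

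You argue directly instead. Orbit--stabiliser gives $[G:C_G(s_i)]<\infty$ for each generator. Finite generation gives $Z(G)=\bigcap_i C_G(s_i)$. The bound $[G:H\cap H']\leq[G:H][G:H']$ then finishes.

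What your route buys:
\begin{itemize}
\item It is elementary and self-contained.
\item It gives the explicit bound that $[G:Z(G)]$ is at most the product of the sizes of the conjugacy classes of the generators.
\item It proves slightly more than stated: only the finitely many generators need finite conjugacy classes. For the paper's use in Section~\ref{sec:gap}, this sharpens the dichotomy to ``either $Z(G)$ has finite index or some generator has an infinite conjugacy class''.
\item It writes out the easy direction, which the paper leaves implicit.
\end{itemize}

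The paper's citation buys brevity and places the statement within FC-group theory. The cost is reliance on an external theorem considerably heavier than what this statement actually requires.
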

	The above statement is quite standard in the theory of FC-groups and is a direct consequence of \cite[\S 2. Theorem 2.]{Baer1948Finiteness}, using the fact that a finite product of finite normal subgroups is a finite normal subgroup.
	\begin{proposition}\label{prop:fiCenter}
		If $Z(G)$ is of finite index in the finitely generated group $G$, then $\cl_G(n)$ is bounded by a constant.
	\end{proposition}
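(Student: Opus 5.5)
Fix a finite generating set $S$ of $G$ and a finite set of coset representatives $R=\{r_1,\dots,r_m\}$ for $Z(G)$ in $G$. Let $C=\max_i\norm{r_i}_S$. The plan is to show that any conjugate pair $g_1\sim g_2$ admits a conjugator from $R$. This gives $\cl_{G,S}(g_1,g_2)\leq C$ uniformly in $g_1,g_2$, and hence $\cl_{G,S}(n)\leq C$ for every $n$.

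The key step is the observation that central elements act trivially by conjugation. Let $g_1\sim g_2$ and choose some $h\in G$ with $hg_1h\inv=g_2$. Write $h=r_iz$ with $r_i\in R$ and $z\in Z(G)$, which is possible since $R$ is a transversal of $Z(G)$. Then
$$
r_ig_1r_i\inv=r_i z g_1 z\inv r_i\inv = hg_1h\inv = g_2,
$$
because $z$ commutes with $g_1$. So $r_i$ is itself a conjugator, and $\norm{r_i}_S\leq C$.

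It follows that $\cl_{G,S}(n)\leq C$ for all $n$, so $\cl_G(n)\simeq 1$; since the constant is independent of the generating set up to $\simeq$, this gives the claim. I do not expect any real obstacle here. The only point to check is that the transversal is finite, which is exactly the finite-index hypothesis, and finite generation is used only to make the word norm, and hence $C$, well defined. \Cref{prop:alternative} is not needed for this direction; it will be used in the complementary case, where some conjugacy class is infinite.
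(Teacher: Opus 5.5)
Your proof is correct and is essentially the paper's own argument. Both fix a finite transversal $R$ of $Z(G)$, observe that the central factor $z$ of any conjugator $h=rz$ acts trivially, and conclude that $\cl_{G,S}(n)\leq\max_{r\in R}\norm{r}_S$.
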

	\begin{proof}
		Let $R$ be a set of representatives of the quotient $G/Z(G)$ and let $g$ be arbitrary; conjugating with an element of $Z(G)$ leaves $g$ invariant. The conjugacy class of $g$ is thus given by $\{rgr\inv\mid r\in R\}$. The conjugator length is thus bounded by $$\max_{r\in R}\{\norm{r}_S\}.$$
	\end{proof}
	\begin{proposition}\label{prop:infiniteConjugacyClass}
		Let $G$ be a finitely generated group with an infinite conjugacy class; then $\cl_G(n)\succ n$.
	\end{proposition}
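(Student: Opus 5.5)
The plan is to produce, for every $n$, a single pair $g_1,g_2$ with $\norm{g_1}_S+\norm{g_2}_S\leq 2n+C$ such that every conjugator between them has length at least $n$. The pair will be $g_1=g$ and $g_2=hgh\inv$, where $g$ is an element with infinite conjugacy class and $h$ is chosen carefully. Fix a finite generating set $S$ and such an element $g\in G$, and write $C(g)$ for its centralizer.

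The first step is to describe the conjugators. For $h,k\in G$ we have $kgk\inv=hgh\inv$ if and only if $h\inv k\in C(g)$, that is, $k\in hC(g)$. So the conjugators from $g$ to $hgh\inv$ are exactly the elements of the coset $hC(g)$, and
$$\cl_{G,S}(g,hgh\inv)=\min\{\norm{k}_S\mid k\in hC(g)\}=:\ell(hC(g)).$$
Since the conjugacy class of $g$ is infinite, $C(g)$ has infinite index. There are only finitely many elements of any bounded length, so $\ell$ is unbounded on the set of left cosets of $C(g)$.

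The main step is to show that every value $n\in\NN$ is attained exactly by some coset. Given $n$, I pick a coset $hC(g)$ with $N:=\ell(hC(g))\geq n$, and choose $h$ of length exactly $N$. I write $h$ as a geodesic word and split it as $h=wh_n$, where $h_n$ is the suffix of length $n$ and $\norm{w}_S=N-n$. Suppose some $k\in h_nC(g)$ had $\norm{k}_S<n$. Then $wk\in wh_nC(g)=hC(g)$ and $\norm{wk}_S<N$, which contradicts the minimality of $N$. Hence $\ell(h_nC(g))=n$, and of course $\norm{h_n}_S=n$. The key point is to take a suffix rather than a prefix, because left multiplication by $w$ maps the coset $h_nC(g)$ onto $hC(g)$.

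Finally, set $g_1=g$ and $g_2=h_ngh_n\inv$. Then $\norm{g_1}_S+\norm{g_2}_S\leq 2\norm{g}_S+2n$, and by the second paragraph $\cl_{G,S}(g_1,g_2)=n$. Therefore $\cl_{G,S}(2n+2\norm{g}_S)\geq n$ for all $n$. Since $\cl_{G,S}$ is non-decreasing, this gives $\cl_{G,S}(m)\geq m/2-\norm{g}_S$ for large $m$, so $\cl_G(n)\succ n$ with a constant depending only on $\norm{g}_S$. I expect the only point that needs care to be the suffix argument in the third paragraph; the rest is bookkeeping.
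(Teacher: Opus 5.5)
Your proof is correct and follows the same strategy as the paper. Both take the pair $g_1=g$, $g_2=hgh\inv$, where $h$ is a conjugator of length exactly $n$ that cannot be shortened, and obtain $\cl_{G,S}(2n+2\norm{g}_S)\geq n$. The only difference is how you find $h$: the paper shows $C_{n-1}\subsetneq C_n$, since otherwise $C_{n-1}$ would be closed under conjugation by $S^{\pm1}$ and would contain the whole infinite class, whereas you take the length-$n$ suffix of a geodesic for a minimal representative of a far-away coset of $C(g)$. These are two standard proofs of the same fact, namely that every sphere in an infinite connected Schreier graph is nonempty.
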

	\begin{proof}
		Let $g\in G$ have an infinite conjugacy class and fix $S$ a generating set of $G$.
		Let $C_n$ be the set$$
		\{hgh\inv\mid \norm{h}_S\leq n\}.
		$$
		The set $C_n$ is clearly finite. We first show that $C_n$ is always strictly larger than $C_{n-1}$. Indeed, suppose that $C_n=C_{n-1}$. Conjugating an element of $C_{n-1}$ with an element of $S$ or $S\inv$ clearly gives an element of $C_n$. As $C_n=C_{n-1}$ it follows that $C_{n-1}$ is invariant under conjugation with elements of $S$ and $S\inv$. It follows that $C_{n-1}$ is also invariant under conjugation with $\langle S\rangle=G$. In particular, $C_{n-1}$ must contain the conjugacy class of $g$. This gives rise to a contradiction as the class of $g$ is infinite but $C_{n-1}$ is finite.
		
		For any $n$, fix $g_n\in C_n\backslash C_{n-1}$. These elements exist by the previous.
		By definition, we have $\cl(g,g_n)=n$. Furthermore, the norm of $g_n$ is at most $2n+\norm g$. Indeed, let $h$ be a conjugator of length $n$. Then $g_n=hgh\inv$.
		It follows that $\cl_{G,S}(2\norm{g}_S+2n)\geq n$ from which the result follows.
	\end{proof}
	\begin{proof}{\Cref{prop:mainResult}}
		For the $\Leftarrow$ direction, for the bounded functions (which are all equivalent under $\simeq$) we may consider any group with finite-index centre by \Cref{prop:fiCenter}. For the $f(n)\succ n$ case, we may, by \Cref{prop:goodRepresentative}, assume $f$ is strictly increasing and that $f(n)> n$ for any $n\in\NN$. Consider then the group $G_f$. If $f$ is computable, then we know $G_f$ is recursively presented by \Cref{prop:rPresented}. We also know that $\cl_{G_f}(n)\simeq f(n)$ by \Cref{prop:clLower} and \Cref{prop:clUpper}. For the $\Rightarrow$ direction, by \Cref{prop:alternative}, either a group has a finite-index centre, in which case $\cl_{G}(n)$ is bounded by \Cref{prop:fiCenter}, or it has an infinite conjugacy class, in which case $\cl_{G}(n)\succ n$ by \Cref{prop:infiniteConjugacyClass}.
	\end{proof}
	\bibliographystyle{alpha}
	\bibliography{bibliography}
\end{document}